\pdfoutput=1
\documentclass[a4paper,UKenglish,cleveref,thm-restate]{lipics-v2021}

\hideLIPIcs
\nolinenumbers
\makeatletter
\renewenvironment{abstract}{\vskip\topmattervskip\bigskipamount
  \noindent\rlap{\color{lipicsLineGray}\vrule\@width\textwidth\@height1\p@}%
  \hspace*{7mm}\fboxsep1.5mm\colorbox[rgb]{1,1,1}{\raisebox{-0.4ex}{\large\selectfont\sffamily\bfseries\abstractname}}%
  \vskip3\p@
  \fontsize{9}{12}\selectfont
  \noindent\ignorespaces}
{\vskip\topmattervskip\baselineskip\noindent
  \fundingHeading\@funding
  \protected@write\@auxout{}{\string\gdef\string\@pageNumberEndAbstract{\thepage}}}
\let\@oddfoot\@empty
\makeatother

\usepackage{nicefrac}
\usepackage{refcount}

\newtheorem*{gallai}{Gallai's Theorem}
\newtheorem*{halesjewett}{Hales-Jewett Theorem}

\theoremstyle{claimstyle}
\newtheorem{subclaim}{Claim}
\numberwithin{subclaim}{theorem}

\crefname{subclaim}{Claim}{Claims}

\let\leq\leqslant
\let\geq\geqslant
\def\setN{\mathbb{N}}
\def\setR{\mathbb{R}}
\def\calC{\mathcal{C}}
\def\calF{\mathcal{F}}
\def\calL{\mathcal{L}}
\def\calT{\mathcal{T}}
\def\Circle{\mathfrak{C}}
\def\Line{\mathfrak{L}}

\bibliographystyle{plainurl}

\title{A solution to Ringel's circle problem}

\author{James Davies}{Department of Pure Mathematics and Mathematical Statistics, University of Cambridge, UK}{jgd37@cam.ac.uk}{}{}
\author{Chaya Keller}{School of Computer Science, Ariel University, Israel}{chayak@ariel.ac.il}{}{}
\author{Linda Kleist}{Department of Computer Science, Technische Universität Braunschweig, Germany}{kleist@ibr.cs.tu-bs.de}{}{}
\author{Shakhar Smorodinsky}{Department of Mathematics, Ben-Gurion University of the Negev, Be'er-Sheva, Israel}{shakhar@math.bgu.ac.il}{}{}
\author{Bartosz Walczak}{Department of Theoretical Computer Science, Jagiellonian University in Kraków, Poland}{bartosz.walczak@uj.edu.pl}{}{}

\authorrunning{J. Davies, C. Keller, L. Kleist, S. Smorodinsky, B. Walczak}

\funding{Chaya Keller and Shakhar Smorodinsky were partially funded by the Israel Science Foundation (grant no.~1065/20).
Bartosz Walczak was partially supported by the National Science Center of Poland grant 2019/34/E/ST6/00443.}

\linepenalty 200

\begin{document}

\maketitle

\begin{abstract}
We construct families of circles in the plane such that their tangency graphs have arbitrarily large girth and chromatic number.
This provides a strong negative answer to Ringel's circle problem (1959).
The proof relies on a (multidimensional) version of Gallai's theorem with polynomial constraints, which we derive from the Hales-Jewett theorem and which may be of independent interest.
\end{abstract}

\section{Introduction}

A \emph{constellation} (see~\cite{JR84}) is a finite collection of circles in the plane in which no three circles are tangent at the same point.
The \emph{tangency graph} $G(\calC)$ of a constellation $\calC$ is the graph with vertex set $\calC$ and edges comprising the pairs of tangent circles in $\calC$.
In this paper, graph-theoretic terms such as chromatic number or girth (i.e., the minimum length of a cycle) applied to a constellation $\calC$ refer to the tangency graph $G(\calC)$.

\begin{figure}[ht]
    \centering
    \includegraphics[page=1]{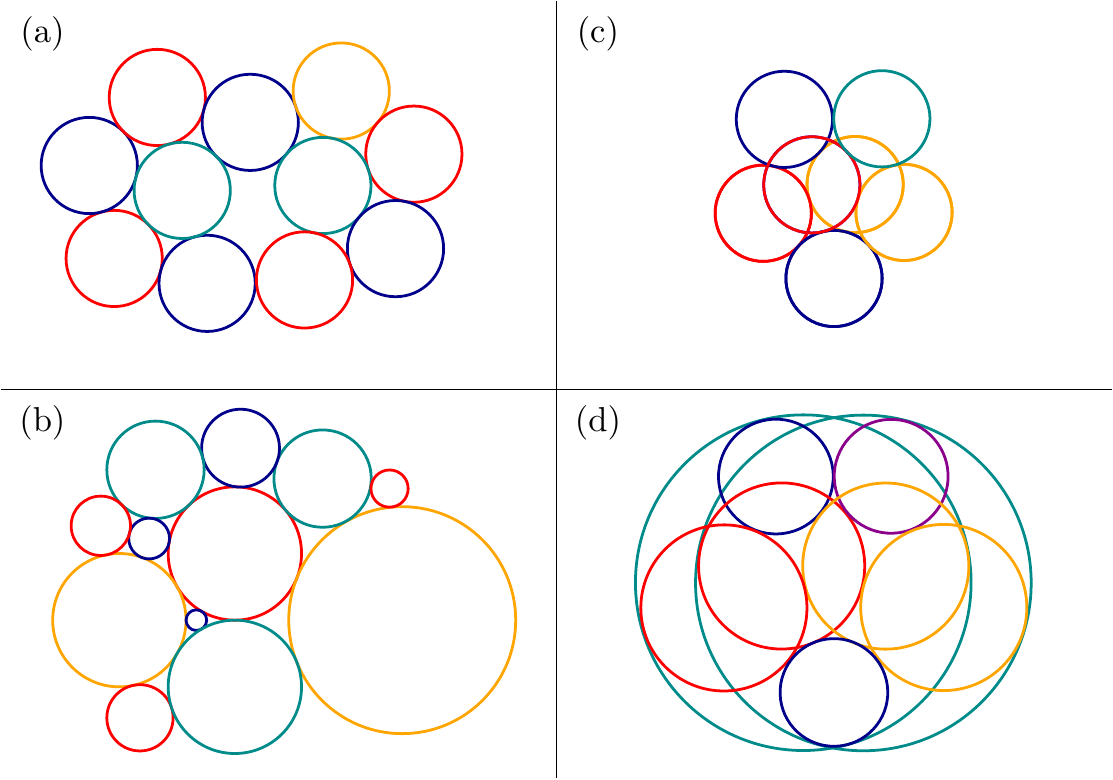}
    \caption{An illustration of the four coloring problems of tangency graphs of constellations: (a)~a~penny graph, (b) a coin graph, (c) an overlapping penny graph, and (d) a general constellation as in the circle problem.}
    \label{fig:examples}
\end{figure}

Jackson and Ringel~\cite{JR84} discussed four problems regarding the chromatic number of constellations.
The problems are illustrated in~\cref{fig:examples}.

\begin{alphaenumerate}
    \item \emph{The penny problem}. What is the maximum chromatic number of a constellation of non-overlapping unit circles?
    
    \item \emph{The coin problem}. What is the maximum chromatic number of a constellation of non-overlapping circles (of arbitrary radii)?

    \item \emph{The overlapping penny problem}. What is the maximum chromatic number of a (possibly overlapping) constellation of unit circles?
    
    \item \emph{The circle problem}. What is the maximum chromatic number of a general constellation of circles?
\end{alphaenumerate}

Jackson and Ringel provided a simple proof that the answer to the \emph{penny problem} is~$4$.
The claim that the answer for the \emph{coin problem} is also $4$ is equivalent to the \emph{four color theorem}~\cite{AH77,AHK77}.
Indeed, on the one hand, if the circles are non-overlapping, then $G(\calC)$ is planar and thus $4$-colorable by the four-color theorem.
On the other hand, as was observed by Sachs~\cite{Sachs94}, by the Koebe-Andreev-Thurston \emph{circle packing theorem}~\cite{Koebe36}, every planar graph can be realized as $G(\calC)$ for some constellation $\calC$ of non-overlapping circles, and hence, the assertion that every such constellation $\calC$ is $4$-colorable implies the four color theorem.

The \emph{overlapping penny problem} is equivalent to the celebrated Hadwiger-Nelson problem, which asks what is the minimum number of colors needed for a coloring of the plane such that no two points at distance $1$ get the same color.
Indeed, if all circles in $\calC$ have a radius of~$\nicefrac{1}{2}$, then two circles are tangent if and only if the distance between their centers is $1$.
For this setting, Isbell observed about 60 years ago that $7$ colors suffice (see~\cite{Soifer09}), and only recently de Grey~\cite{AdG18} showed that $4$ colors are not sufficient, and hence, the chromatic number of the plane lies between $5$ and $7$.
(Note that although the overlapping penny problem considers only finite sets of circles and the Hadwiger-Nelson problem considers an infinite set, they are still equivalent by a standard compactness argument.)

Unlike for the first three problems, in which a finite upper bound was known already when they were stated, for the \emph{circle problem} no finite upper bound was known.
This open problem was introduced for the first time by Ringel~\cite{Ringel59} in 1959 and appeared in several places as either a question (e.g., \cite{JR84,JT95,Pach17}) or a conjecture that there is a finite upper bound (e.g., \cite{Kalai15}).
For lower bounds, Jackson and Ringel~\cite{JR84} presented an example that requires $5$ colors; see \cref{fig:examples}(d).
Another such example follows from de Grey's $5$-chromatic unit distance graph.
No construction requiring more than $5$ colors has been known so far.

In this paper, we solve Ringel's circle problem in a strong sense by showing that the chromatic number is unbounded, even if we require high girth.

\begin{theorem}\label{main}
There exist constellations of circles in the plane with arbitrarily large girth and chromatic number.
\end{theorem}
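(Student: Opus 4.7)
\medskip

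\noindent\textbf{Proof proposal.}
The plan is to build, for arbitrary $k$ and $g$, a constellation whose tangency graph has girth $>g$ and chromatic number $>k$, by setting up a \emph{polynomial parameterization} of circles that converts proper colorings into colorings of a combinatorial parameter space, and then pulling a forbidden monochromatic pattern back via the multidimensional Gallai theorem with polynomial constraints announced in the abstract.

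First, I would choose a small ``target configuration'' $T$ of circles whose pairwise tangency pattern already rules out a proper coloring by few colors---the natural candidate is an odd cycle of length $>g$ of tangent circles, or a ``fan'' of many circles tangent to one common circle. The core step is to encode $T$ as the image of a fixed polynomial template in a high-dimensional parameter space $\setN^d$: I would parameterize a circle $\Circle_p$ (with center and radius polynomial functions of $p\in\setN^d$) so that two circles $\Circle_p$ and $\Circle_q$ are tangent precisely when $(p,q)$ satisfies a prescribed polynomial identity, and arrange things so that each combinatorial line (in the Hales--Jewett sense) of parameters produces a copy of $T$. A concrete route is to pick a ``base'' circle $\Line_0$ and parameterize tangent circles to $\Line_0$ by rational functions of a small number of real parameters; translations/inversions by a grid of parameters then furnish the required polynomial tangency conditions.

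Second, I would assume toward a contradiction that the resulting constellation $\calC_N$ on parameter set $[N]^d$ admits a proper $k$-coloring. This induces a $k$-coloring of $[N]^d$, and the polynomial Gallai theorem (derived from Hales--Jewett) guarantees, for $N$ large enough in terms of $k$, a monochromatic combinatorial line---i.e.\ a monochromatic copy of the template $T$. By construction, $T$ contains two tangent circles (in fact an odd short cycle of them), contradicting that the coloring is proper. So $\chi(\calC_N)>k$. To achieve girth $>g$ simultaneously, I would enlarge $T$ to be an odd cycle of length $>g$ (or, alternatively, a configuration that forces $\chi>k$ without creating any short cycle), and then either (i) verify directly from the polynomial parameterization that only tangencies dictated by the template occur, so no short cycles are introduced accidentally; or (ii) apply a probabilistic circle-deletion step in the spirit of Erd\H{o}s, removing all circles participating in any cycle of length $\leq g$ while retaining $\chi>k$.

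The main obstacle, as I see it, is step (i): exhibiting a polynomial parameterization of circles that is \emph{tangency-faithful}, i.e.\ in which all tangencies among the $\Circle_p$'s come from a prescribed polynomial relation and nothing else. Tangency of circles is a single algebraic equation on centers and radii, so spurious solutions are a priori plentiful, and in high dimension they could create arbitrarily short cycles in $G(\calC)$ that would destroy the girth bound. The delicate part of the construction will therefore be the design of the polynomial template---presumably using a careful family of circles built by inversions or M\"obius images of a generic base circle---so that the parameter-space Gallai pattern is the \emph{only} source of tangencies, while still being rich enough for Hales--Jewett to plant a long odd cycle inside it. Once that algebraic-combinatorial alignment is achieved, the chromatic and girth bounds fall out in parallel from a single application of the polynomial Gallai theorem.
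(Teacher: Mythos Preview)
Your proposal has a genuine gap at precisely the point you flag as the main obstacle, and it is not merely a matter of care in choosing the template: in any linear parameter space for circles the \emph{external}-tangency relation is not invariant under the homothetic maps that Gallai's theorem supplies. Concretely, with the natural coordinates $(x,y,r)$ the condition $(x_1-x_2)^2+(y_1-y_2)^2=(r_1+r_2)^2$ is preserved by $(x,y,r)\mapsto(x^*+\lambda x,\,y^*+\lambda y,\,r^*+\lambda r)$ only when $r^*=0$, and Gallai/Hales--Jewett gives you no control over the translation component $r^*$. Hence a monochromatic homothetic copy of your template $T$ need not carry any tangency at all, and the contradiction in your second paragraph does not fire. (Internal tangency \emph{is} homothety-invariant in these coordinates, but building the whole argument on internal tangencies creates its own difficulties and is not what the paper does.) The probabilistic deletion fallback does not rescue the plan either, since without tangency-faithfulness you have no lower bound on the chromatic number to protect in the first place.

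The paper sidesteps this obstruction by \emph{not} asking homothetic copies to preserve tangency. It runs a Tutte-type induction: given a constellation $\calC_k$ with girth $\geq g$ and $\chi\geq k$, encode it as $T_0=\{(x,y,r):\Circle(x,y,r)\in\calC_k\}\subset\setR^3$ and apply the polynomial Gallai theorem to $T_0$ to obtain $X\subset\setR^3$ together with a sparse family $\calT$ of homothetic copies of $T_0$. The points of $X$ become ``large'' circles $\Circle(x',y',R-r')$ (pairwise non-tangent once $R$ is large), and for each $T\in\calT$ a fresh scaled copy of $\calC_k$ is placed so that each of its circles is externally tangent to exactly one large circle indexed by $T$. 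The polynomial constraints in Gallai are used only to \emph{forbid} coincidences (no concentric and no internally tangent large circles), not to manufacture tangencies. A proper $k$-coloring of the result would make some $T\in\calT$ monochromatic on the large side, forcing the matched copy of $\calC_k$ to be properly $(k{-}1)$-colored---a contradiction. Girth is controlled by the short-cycle bound in the sparse Gallai theorem combined with the matching structure between large and small circles, not by any tangency-faithfulness of the parameterization.
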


The constellation condition (that no three circles are tangent at a point) is crucial for Ringel's circle problem to be interesting---otherwise one could drive the chromatic number arbitrarily high by taking a set of circles all tangent at one point.
In \cref{main}, however, the condition is redundant because it follows from the stronger condition that the girth of the tangency graph is greater than $3$.
Actually, we prove an even stronger statement (\cref{main induction}) in which we additionally forbid pairs of internally tangent circles.

The first author~\cite{Davies21} recently proved that there are intersection graphs of axis-aligned boxes in $\setR^3$ with arbitrarily large girth and chromatic number.
The main tool for this result is a ``sparse'' version of Gallai's theorem due to Prömel and Voigt~\cite{PV90} (see \cref{Gallai}),
which was applied in a modification of Tutte's construction of triangle-free graphs with large chromatic number~\cite{Descartes47,Descartes54}.

To prove \cref{main}, we also use  a ``sparse'' version of Gallai's theorem.
However, it is crucial in our context to guarantee that there are no ``unwanted'' tangencies in the resulting collection of circles.
To this end, we develop a refined ``sparse'' version of Gallai's theorem with additional (polynomial) constraints (\cref{Gallai v2}).
We believe that this version may be applicable to obtaining lower bound constructions for other geometric coloring problems, in which some specific form of algebraic independence is requested.

Tangent circles can be thought of as circles intersecting at zero angle.
We extend \cref{main} to graphs defined by pairs of circles intersecting at an arbitrary fixed angle.
Specifically, we say that two intersecting circles $C_1$ and $C_2$ intersect at angle $\theta$ if at any intersection point of $C_1$ and $C_2$, the (smaller) angle between the tangent line to $C_1$ and the tangent line to $C_2$ equals $\theta$.
For any $\theta\in[0,\nicefrac{\pi}{2}]$, the \emph{$\theta$-graph} $G_\theta(\calC)$ of a collection of circles $\calC$ is the graph with vertex set~$\calC$ and edges comprising the pairs of circles in $\calC$ that intersect at angle~$\theta$.
In particular, the $0$-graph is the tangency graph.
We extend \cref{main} as follows.

\begin{theorem}\label{theta}
For every\/ $\theta\in[0,\nicefrac{\pi}{2}]$, there exist\/ $\theta$-graphs of circles in the plane with arbitrarily large girth and chromatic number.
\end{theorem}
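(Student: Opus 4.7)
My plan is to adapt the construction used for \cref{main induction} so that it produces circles with prescribed $\theta$-intersections rather than tangencies. The key observation is that the $\theta$-intersection condition is polynomial in the circle parameters: two circles with centers $o_1,o_2\in\setR^2$ and radii $r_1,r_2>0$ meet at angle $\theta$ exactly when
\[
\|o_1-o_2\|^2 = r_1^2 + r_2^2 + 2r_1 r_2\cos\theta,
\]
which coincides with the tangency condition $\|o_1-o_2\|^2=(r_1+r_2)^2$ when $\theta=0$. Because this is an equation of the same form and degree as tangency, the Gallai-with-polynomial-constraints framework (\cref{Gallai v2}) should treat it uniformly.

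The first step is to rerun the construction used for \cref{main induction}, replacing each occurrence of the tangency condition with the corresponding $\theta$-intersection condition, and each polynomial inequality forbidding a spurious tangency with one forbidding a spurious $\theta$-intersection. Since the combinatorial backbone of the argument (the inductive production of a graph of large girth and chromatic number together with its realization as a circle configuration via \cref{Gallai v2}) is oblivious to the geometric meaning of the polynomial constraints and uses only their form, this should yield a finite collection $\calC$ of circles whose $\theta$-graph $G_\theta(\calC)$ has arbitrarily large girth and chromatic number.

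The main obstacle I anticipate is verifying that the construction remains geometrically realizable for every $\theta\in[0,\nicefrac{\pi}{2}]$. Concretely, one needs to confirm that the semialgebraic set of circle configurations satisfying the prescribed $\theta$-intersections, while avoiding all forbidden coincidences (coincident circles, extra $\theta$-intersections, degenerate alignments), is nonempty at each inductive step. For $\theta$ close to $0$ the required configurations are small perturbations of those produced by the proof of \cref{main}, so by continuity the same existence arguments should go through. For larger $\theta$ the circles must overlap substantially and intersect in two points rather than one, which can in principle produce new accidental alignments; however, each such undesired configuration is itself cut out by polynomial equations and thus can be excluded by adding finitely many more polynomial inequalities to the list of forbidden relations handled by \cref{Gallai v2}. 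With this bookkeeping in place, the argument for \cref{theta} should parallel that of \cref{main} line by line.
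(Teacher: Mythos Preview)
Your plan is sound in outline: the $\theta$-intersection condition is indeed a polynomial constraint of the same shape as tangency, and the $\setR^3$ construction behind \cref{main induction} can be adapted uniformly in $\theta$. The bookkeeping you anticipate---repositioning the small circles $\nu_{R,T}(C)$ so that they meet the large circles at angle $\theta$, and proving an analogue of \cref{tangency polynomial} for the $\theta$-equation---is real work but goes through, since the relevant equation in $R$ remains linear once the quadratic terms cancel.

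The paper, however, takes a markedly simpler route for $\theta>0$. It works in $\setR$ rather than $\setR^3$, and with the unconstrained sparse Gallai theorem (\cref{Gallai}) rather than \cref{Gallai v2}: each circle in $\calC_k$ is replaced by the $y$-coordinates of the one or two horizontal lines meeting it at angle $\theta$; Gallai is applied to this one-dimensional template; homothetic copies of $\calC_k$ are hung on the resulting system of horizontal lines, spread out horizontally so that distinct copies do not interact; and finally a geometric inversion converts the lines into circles. The reason this shortcut is available for $\theta>0$ but not for $\theta=0$ is precisely the inversion step: the horizontal lines become a pencil of circles all tangent at a single point, which for $\theta>0$ contributes no edges to $G_\theta$ (these circles meet pairwise at angle $0\neq\theta$), whereas for $\theta=0$ it would create a clique of mutually tangent circles and destroy the girth bound. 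Your uniform approach sidesteps inversion altogether and so covers $\theta=0$, at the price of dragging the full three-dimensional constrained-Gallai machinery through every case.
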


The proof of \cref{theta} for $\theta>0$ is significantly simpler than the proof for $\theta=0$ corresponding to \cref{main}.
Therefore, the remainder of the paper is organized as follows.
In \cref{sec:gallai}, we introduce Gallai's theorem and prove a version of it with additional constraints as needed for the proof of \cref{main}.
In \cref{sec:theta}, we prove \cref{theta} for $\theta>0$.
As the underlying ideas and tools are similar but simpler, this can be considered as a warm-up for the proof of the more involved case $\theta=0$, which follows in \cref{sec:main}.
Finally, in \cref{sec:Gallai revisited}, we provide an even stronger, ``induced'' version of Gallai's theorem with constraints.

\section{Gallai's theorem with constraints}
\label{sec:gallai}

We start by introducing results from Ramsey theory---Gallai's theorem and its versions that we need for the proofs of \cref{main,theta}.

A \emph{homothetic map} in $\setR^d$ is a map $h\colon\setR^d\to\setR^d$ of the form $h(p)=p^*+\lambda p$ for some~$p^*\in\setR^d$ and $\lambda>0$.
In other words, a homothetic map is a composition of (positive) uniform scaling and translation.
A set $T'\subseteq\setR^d$ is a \emph{homothetic copy} of a set $T\subseteq\setR^d$ if there is a homothetic map $h$ in $\setR^d$ such that $T'=h(T)$.

The following beautiful theorem, which is a generalization of the well-known van der Waerden's theorem on arithmetic progressions~\cite{VanderWaerden27}, was first discovered by Gallai in the~1930s, as reported by Rado~\cite{Rado45}.

\begin{gallai}
For every finite set\/ $T\subset\setR^d$ and every positive integer\/ $k$, there exists a finite set\/ $X\subset\setR^d$ such that every\/ $k$-coloring of\/ $X$ contains a monochromatic homothetic copy of\/ $T$.
\end{gallai}

A \emph{cycle} of length $\ell\geq 2$ on a set $X$ is a tuple $(T_1,\ldots,T_\ell)$ of distinct subsets of $X$ such that there exist distinct elements $x_1,\ldots,x_\ell\in X$ with $x_i\in T_i\cap T_{i+1}$ for $i\in[\ell-1]$ and $x_\ell\in T_\ell\cap T_1$.

In order to guarantee high girth in the proofs of \cref{main,theta}, we need an appropriate ``sparse'' version of Gallai's theorem, which excludes short cycles among the claimed homothetic copies of $T$ in $X$ (one of which is guaranteed to be monochromatic).
In particular, the following strengthening of Gallai's theorem suffices for the purpose of proving \cref{theta} for $\theta>0$.

\begin{theorem}[Prömel, Voigt~\cite{PV90}]\label{Gallai}
For every finite set\/ $T\subset\setR^d$ of size at least\/ $3$ and for any integers\/ $g\geq 3$ and\/ $k\geq 1$, there exists a finite set\/ $X\subset\setR^d$ such that every\/ $k$-coloring of\/ $X$ contains a monochromatic homothetic copy of\/ $T$ and no tuple of fewer than\/ $g$ homothetic copies of\/ $T$ in\/ $X$ forms a cycle on\/ $X$.
\end{theorem}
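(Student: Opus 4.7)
The plan is to derive Theorem~\ref{Gallai} from a sparse form of the Hales–Jewett theorem via a generic weighted embedding. Write $n=|T|\geq 3$ and fix a bijection between $T$ and the alphabet $[n]$. For $N\geq 1$ and $\alpha\in(0,\infty)^N$, the weighted-sum map $\phi_\alpha\colon T^N\to\setR^d$ defined by $\phi_\alpha(t_1,\ldots,t_N)=\sum_{i=1}^N\alpha_it_i$ sends each combinatorial line of $T^N$ with non-empty variable-coordinate set $S$ and fixed letters $(a_i)_{i\notin S}$ to the homothetic copy $\sum_{i\notin S}\alpha_ia_i+\bigl(\sum_{i\in S}\alpha_i\bigr)T$ of $T$. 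Applying the ordinary Hales–Jewett theorem to the alphabet $T$ with $N=N(n,k)$ therefore already yields Gallai's theorem on $X:=\phi_\alpha(T^N)$, but it gives no control on cycles.

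To sparsify, I would first prove, by iterated partite amalgamation in the style of Nešetřil–Rödl, the following sparse Hales–Jewett statement: for every $n\geq 2$, $g\geq 3$, $k\geq 1$ there exist $N$ and a set $W\subseteq T^N$ such that (i) every $k$-colouring of $W$ admits a monochromatic combinatorial line of $T^N$ contained in $W$, and (ii) no fewer than $g$ combinatorial lines of $T^N$ contained in $W$ form a cycle on $W$. Then, choosing $\alpha\in(0,\infty)^N$ with $\alpha_1,\ldots,\alpha_N$ algebraically independent over the field generated by $\mathbb{Q}$ and the coordinates of the elements of $T$, and setting $X:=\phi_\alpha(W)$, injectivity of $\phi_\alpha$ on $W$ transports (i) directly to the Ramsey conclusion. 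For (ii), the critical additional step is to show that every homothetic copy of $T$ contained in $X$ is the $\phi_\alpha$-image of a combinatorial line of $T^N$ lying in $W$: from $\sum_i\alpha_iw(t)_i=p^*+\lambda t$ holding for all $t\in T$, algebraic independence forces each coordinate map $t\mapsto w(t)_i$ to be affine in $t$, and the constraint $w(t)_i\in T$ (together with an auxiliary enlargement of $T$ by an extra generic coordinate to break any exotic affine self-embedding, should $T$ possess one) forces this map to be either constant or the identity, so the $w(t)$'s form a combinatorial line. Cycles of homothetic copies of $T$ in $X$ therefore correspond bijectively to cycles of combinatorial lines in $W$, and (ii) delivers the girth condition.

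The main obstacle is the sparse Hales–Jewett step itself. The construction proceeds iteratively: one starts from an ordinary Hales–Jewett cube and, at each stage, replaces a coordinate block by a larger gadget—again a Hales–Jewett cube, with carefully chosen parameters—in such a way that pre-existing combinatorial lines are preserved, no new cycle of length less than $g$ appears on the accumulated lines, and a pigeonhole pass maintains the Ramsey property. Engineering all three conditions simultaneously is the delicate technical heart of the Prömel–Voigt argument; the subsequent algebraic-independence step is straightforward by comparison, but is essential for converting combinatorial sparsity of $W$ into a geometric statement about \emph{every} homothetic copy of $T$ in $X$.
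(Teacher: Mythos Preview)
The paper does not prove \cref{Gallai} directly---it is quoted from Pr\"omel--Voigt---but it does prove the generalization \cref{Gallai v3} in \cref{sec:Gallai revisited}, and that proof is the right comparison point. Your plan matches it in outline: quote a sparse Hales--Jewett theorem, push the cube into $\setR^d$ via a generic weighted-sum map, and argue that every homothetic copy of $T$ inside the image is the image of a line. The paper chooses the weights by avoiding the zero set of an explicit finite list of polynomials (\cref{zero set}) rather than by algebraic independence; either device works for injectivity and for the line-detection step.

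There is, however, a real gap in your line-detection step, and your parenthetical fix does not close it. If $T$ is centrally symmetric, then besides the identity and the constant maps there is a third affine self-map of $T$, namely the central reflection. Concretely, for $T=\{-1,0,1\}$ and $N=2$ the anti-diagonal $\{(1,3),(2,2),(3,1)\}$ maps under $\phi_\alpha$ to $\{\alpha_2-\alpha_1,\,0,\,\alpha_1-\alpha_2\}$, a bona fide homothetic copy of $T$ that is \emph{not} the image of an ordinary combinatorial line. So ``ordinary'' sparse Hales--Jewett cannot control all homothetic copies, no matter how generic $\alpha$ is. Your proposed remedy---replacing $T$ by $T'\subset\setR^{d+1}$ with an extra generic coordinate---proves the theorem for $T'$, but projecting back to $\setR^d$ loses the girth conclusion: a homothetic copy of $T$ in the projection need not lift to a homothetic copy of $T'$ upstairs, so short cycles among copies of $T$ are not excluded.

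The paper resolves this cleanly by citing the stronger sparse Hales--Jewett theorem of Pr\"omel and Voigt (\cref{Hales v2}) that forbids short cycles among \emph{generalized} combinatorial lines (each active coordinate may carry its own permutation of $[m]$). With that in hand, the polynomial from \cref{homothety polynomial} is used to show that every homothetic copy of $T$ in $X$ arises from a generalized line, and the girth bound follows. If you want to keep your overall architecture, the correct repair is to upgrade your sparse Hales--Jewett statement from ordinary to generalized lines, not to tamper with $T$.
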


Since \cref{Gallai} only guarantees the existence of a set $X$, it is not specific enough to prove \cref{main}.
Roughly speaking, in our proof of \cref{main}, we apply (a refined version of) Gallai's theorem to a family of circles in the plane (with $d=3$, the third coordinate representing the radius) such that the resulting family of circles satisfies a number of additional conditions, e.g., it does not contain two internally tangent circles.
To guarantee the additional properties, we develop a refined ``sparse'' version of Gallai's theorem, which imposes polynomial constraints on the resulting set.

We say that a family $\calF$ of $2d$-variate real polynomials \emph{respects} a set $X\subset\setR^d$ if $f(p,q)\neq 0$ for all $f\in\calF$ and all pairs of distinct points $p,q\in X$.

\begin{theorem}\label{Gallai v2}
Let\/ $T$ be a finite subset of\/ $\setR^d$ of size at least\/ $3$, let\/ $\calF$ be a countable family of\/ $2d$-variate real polynomials that respects\/ $T$, and let\/ $g$ and\/ $k$ be positive integers.
Then there exist a finite set\/ $X\subset\setR^d$ and a collection\/ $\calT$ of homothetic copies of\/ $T$ in\/ $X$ satisfying the following conditions:
\begin{enumerate}
\item $\calF$ respects\/ $X$,
\item no tuple of fewer than\/ $g$ homothetic copies of\/ $T$ in\/ $\calT$ form a cycle,
\item every\/ $k$-coloring of\/ $X$ contains a monochromatic homothetic copy of\/ $T$ in\/ $\calT$.
\end{enumerate}
\end{theorem}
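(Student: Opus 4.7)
My approach is to combine the Hales--Jewett-based construction underlying Theorem~\ref{Gallai} with a genericity argument to layer in the polynomial constraints. Write $T = \{t_1, \dots, t_m\}$ with $m \geq 3$, and for a positive integer $n$ and a tuple $\lambda = (\lambda_1, \dots, \lambda_n) \in \setR_{>0}^n$ consider the map $\phi_\lambda \colon [m]^n \to \setR^d$ given by $\phi_\lambda(\vec i) = \sum_{k=1}^n \lambda_k t_{i_k}$. A combinatorial line $L \subseteq [m]^n$ with wildcard set $W$ and fixed coordinates $c$ is sent by $\phi_\lambda$ to $\sum_{k \notin W} \lambda_k t_{c_k} + \bigl(\sum_{k \in W} \lambda_k\bigr) T$, which is a homothetic copy of $T$. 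The standard reduction from Hales--Jewett to Gallai, in the sparse form proved by Pr\"omel and Voigt, realizes the set $X$ of Theorem~\ref{Gallai} precisely in this way: for $n$ sufficiently large (as a function of $T$, $g$, $k$) and for $\lambda$ chosen from a nonempty open subset $\Omega \subseteq \setR_{>0}^n$, the map $\phi_\lambda$ is injective, every $k$-coloring of $\phi_\lambda([m]^n)$ contains a monochromatic image of a combinatorial line, and no fewer than $g$ homothetic copies of $T$ in $\phi_\lambda([m]^n)$ form a cycle.

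To additionally enforce condition~(1), I would pin down $\lambda$ by a genericity argument. The key observation is that for each $f \in \calF$ and each pair of distinct points $\vec i, \vec j \in [m]^n$, the function $\lambda \mapsto f(\phi_\lambda(\vec i), \phi_\lambda(\vec j))$ is a polynomial in $\lambda$ that is \emph{not} identically zero. Indeed, fix any coordinate $k^* \in [n]$ with $i_{k^*} \neq j_{k^*}$ (such $k^*$ exists because $\vec i \neq \vec j$) and evaluate at the standard basis vector $\lambda = e_{k^*}$: then $\phi_\lambda(\vec i) = t_{i_{k^*}}$ and $\phi_\lambda(\vec j) = t_{j_{k^*}}$ are distinct elements of $T$, so the value equals $f(t_{i_{k^*}}, t_{j_{k^*}}) \neq 0$ by the hypothesis that $\calF$ respects $T$. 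The zero locus of this polynomial is therefore a proper algebraic subvariety of $\setR^n$, of Lebesgue measure zero; taking the countable union over $f \in \calF$ and the finite union over the pairs $(\vec i, \vec j)$ still yields a measure-zero set. Picking $\lambda \in \Omega$ outside this bad set (possible since $\Omega$ is nonempty and open) delivers a single $\lambda$ for which $X := \phi_\lambda([m]^n)$ inherits the sparse Gallai structure and is respected by $\calF$; setting $\calT$ to be the collection of all homothetic copies of $T$ in $X$ then satisfies all three conditions of the theorem.

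The main obstacle I anticipate is the use of Theorem~\ref{Gallai} in a parametric form with a nonempty open range of valid $\lambda$, rather than as an abstract existence statement. This is essentially built into the Hales--Jewett-based construction of Pr\"omel and Voigt, since the conditions that a valid $\lambda$ must satisfy (injectivity of $\phi_\lambda$ and avoidance of short cycles among all homothetic copies of $T$ in the image) are each the complement of a finite union of proper algebraic subvarieties of $\setR^n$. Using Theorem~\ref{Gallai} as a true black box would therefore require some mild repackaging of its proof; however, once this parametric form is in hand, the additional polynomial constraints impose just one more measure-zero avoidance condition, and cause no substantive difficulty.
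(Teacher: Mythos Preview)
Your genericity argument for condition~(1)---evaluating $f(\phi_\lambda(\vec i),\phi_\lambda(\vec j))$ at a standard basis vector to see it is not identically zero, then avoiding a countable union of proper zero sets---is exactly the mechanism the paper uses. The gap is in how you handle condition~(2).

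You assert that the Pr\"omel--Voigt construction realizes $X$ as $\phi_\lambda([m]^n)$ for the \emph{full} cube, and that ``avoidance of short cycles among all homothetic copies of $T$ in the image'' is the complement of a finite union of proper subvarieties in $\lambda$. This is false. Every ordinary combinatorial line in $[m]^n$ maps under $\phi_\lambda$ to a genuine homothetic copy of $T$ for \emph{every} $\lambda\in\setR_{>0}^n$ (the scale factor is $\sum_{k\in W}\lambda_k>0$), so these copies are not removable by any genericity in $\lambda$. But the full cube $[m]^n$ contains short cycles of combinatorial lines already for $n=2$: for instance, in $[m]^2$ the diagonal line, the first column, and the top row form a $3$-cycle. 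Hence for any $\lambda$ the set $\phi_\lambda([m]^n)$ contains a $3$-cycle of homothetic copies of $T$, and your open set $\Omega$ is empty as soon as $g>3$.

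The paper avoids this by invoking the sparse Hales--Jewett theorem (Theorem~\ref{Hales}), which hands you a \emph{subset} $H\subseteq[m]^n$ in which no short cycle of combinatorial lines exists; one then sets $X=\phi_\lambda(H)$ and takes $\calT$ to be precisely the images of the combinatorial lines lying in $H$, not all homothetic copies of $T$ in $X$. With $H$ fixed combinatorially, the cycle-freeness of $\calT$ is automatic from Theorem~\ref{Hales} and injectivity of $\phi_\lambda$; genericity of $\lambda$ is then needed only for injectivity and for condition~(1), exactly as you argue. Your stronger choice $\calT=\{\text{all homothetic copies of }T\text{ in }X\}$ is in fact the content of the separate Theorem~\ref{Gallai v3}, which requires both the generalized-line variant of sparse Hales--Jewett (Theorem~\ref{Hales v2}) and an additional polynomial-identity argument (Lemma~\ref{homothety polynomial}) to show that, for generic $\lambda$, every homothetic copy of $T$ in $X$ comes from a generalized combinatorial line.
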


While \cref{Gallai v2} is what we need for the proof of \cref{main}, it is weaker than \cref{Gallai} in one aspect---its assertion holds for some collection $\calT$ of homothetic copies of $T$ rather than for all homothetic copies.
In \cref{sec:Gallai revisited}, we provide a common generalization of \cref{Gallai,Gallai v2} where $\calT$ can be taken as the set of \emph{all} homothetic copies of $T$ in $X$.

One of the ways of proving Gallai's theorem is to derive it from the Hales-Jewett theorem~\cite{HJ63}.
Our proof of \cref{Gallai v2} goes along the same line.

For $m,n\in\setN$, a subset $L$ of the $n$-dimensional $m$-cube $[m]^n$ is called a \emph{combinatorial line} if there exist a non-empty set of indices $I=\{i_1,\ldots,i_k\}\subseteq[n]$ and a choice of $x^*_i\in[m]$ for every $i\in[n]\setminus I$ such that
\[L=\bigl\{(x_1,\ldots,x_n)\in[m]^n:x_{i_1}=\cdots=x_{i_k}\text{ and }x_i=x^*_i\text{ for }i\notin I\bigr\}.\]
The indices in $I$ are called the \emph{active coordinates} of $L$.

\begin{halesjewett}
For any\/ $m,k\in\setN$, there exists\/ $n\in\setN$ such that every\/ $k$-coloring of\/ $[m]^n$ contains a monochromatic combinatorial line.
\end{halesjewett}

We need the following ``sparse'' version of the Hales-Jewett theorem.

\begin{theorem}[Prömel, Voigt~\cite{PV88}]\label{Hales}
For any\/ $m,g,k\in\setN$ with\/ $m\geq 3$, there exist\/ $n\in\setN$ and a set\/ $H\subseteq[m]^n$ such that every\/ $k$-coloring of\/ $H$ contains a monochromatic combinatorial line of\/ $[m]^n$ and no tuple of fewer than\/ $g$ combinatorial lines of\/ $[m]^n$ contained in\/ $H$ forms a cycle.
\end{theorem}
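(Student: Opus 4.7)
The plan is to prove Theorem~\ref{Hales} by induction on the girth parameter $g$, using the standard Hales-Jewett theorem as the Ramsey input and a product-amalgamation strategy of Nešetřil-Rödl type to raise the girth one step at a time.

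A quick structural observation handles the case $g = 3$. Any two distinct combinatorial lines of $[m]^n$ share at most one point: if they have identical active coordinate sets then distinct fixed values force them to be disjoint, and if one line's active coordinates meet a fixed coordinate of the other the intersection point is pinned down to a single candidate. Hence no cycles of length $2$ exist among combinatorial lines of $[m]^n$ at all, and the case $g = 3$ is just the standard Hales-Jewett theorem applied with $H = [m]^n$.

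For the inductive step from $g$ to $g + 1$, assume $H_0 \subseteq [m]^{n_0}$ has the desired Ramsey property and girth $\geq g$. Identify $[m]^{n_0 + N}$ with $[m]^{n_0} \times [m]^N$ and build $H$ as an amalgam $H_0 \times S$ for a set $S \subseteq [m]^N$ still to be chosen. The Ramsey property transfers by a direct application of Hales-Jewett on the index factor: any $k$-coloring $c$ of $H$ induces a $k^{|H_0|}$-coloring of $S$ via the map $s \mapsto (h \mapsto c(h,s))$; for $N$ large enough, the standard Hales-Jewett theorem produces a combinatorial line $\mu$ on which this induced coloring is constant, and the inductive Ramsey property of $H_0$ then gives a monochromatic combinatorial line of $H_0$, which together with any $s \in \mu$ lifts to a monochromatic combinatorial line of $[m]^{n_0 + N}$ contained in $H$.

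The hard part is controlling the girth. Every combinatorial line of $[m]^{n_0 + N}$ contained in $H$ projects to a combinatorial line (or a single point) in each factor, so a short cycle in $H$ projects to a ``cycle up to degeneration'' in each of $H_0$ and $S$. Three families of short cycles must be ruled out: horizontal cycles inside a single fiber $H_0 \times \{s\}$, excluded by the inductive girth of $H_0$; vertical cycles inside a single fiber $\{h\} \times S$, which force $S$ itself to have girth $\geq g + 1$ in the combinatorial-line hypergraph sense; and ``slanted'' cycles whose lines mix active coordinates of both factors, whose projections can degenerate in subtle ways. The standard resolution, and the technical heart of the Prömel-Voigt argument, is a simultaneous induction producing $H$ and $S$ together, likely using a parameter-word (Graham-Rothschild) strengthening of Hales-Jewett to rigidify the amalgamation so that mixed lines project in essentially one way, allowing every would-be short cycle in $H$ to be pushed down to a forbidden configuration in $H_0$. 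Carrying out this joint induction while preserving the Ramsey property throughout is the main obstacle.
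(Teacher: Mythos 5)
The paper does not prove \cref{Hales}; it is cited as a known theorem of Pr\"omel and Voigt~\cite{PV88} and used as a black box. So there is no proof in the paper against which to compare your proposal, and I have to assess the proposal on its own.

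Your base case $g=3$ is correct: two distinct combinatorial lines of $[m]^n$ meet in at most one point (the active-coordinate sets must coincide, and then the fixed parts must coincide, forcing equality), so no cycle of length $2$ exists and $H=[m]^n$ with ordinary Hales-Jewett works. The color-transfer argument in the inductive step is also standard and correct in outline, \emph{provided} one can actually carry out an induction of the proposed shape. But this is precisely where the proposal breaks down, and you acknowledge it yourself: the girth amplification is ``the main obstacle'' and is left unstated. That is not a small missing lemma --- it is the entire content of the theorem, since the dense version (plain Hales-Jewett) is standard. As it stands, the proposal does not prove the theorem.

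Moreover, the specific amalgam you name, $H = H_0 \times S$ with $S \subseteq [m]^N$, has a structural problem you partly notice but do not resolve. For the Ramsey transfer you need a monochromatic line inside $S$ under a $k^{|H_0|}$-coloring, so $S$ must itself be Ramsey for $k^{|H_0|}$ colors; and to exclude ``vertical'' short cycles in a fiber $\{h\}\times S$ you need $S$ to have girth at least $g+1$. Both of these are exactly the properties you are trying to construct, one step up in the number of colors and at the target girth, so the induction as formulated is circular. The actual Pr\"omel--Voigt argument is not a single product amalgamation but an iterated partite construction in the spirit of Ne\v set\v ril--R\"odl, where a ``picture'' is built and glued along copies of a smaller picture over many stages, and the girth is controlled by a careful analysis of how a short cycle could thread through the amalgamation stages; a direct product of the two factors does not capture this. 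Gesturing at a ``joint induction'' and a Graham--Rothschild strengthening is the right neighborhood of ideas, but it is not a proof, and the crucial lemma that makes mixed lines ``project in essentially one way'' is never formulated, let alone proved.
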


We also need the following simple algebraic fact.

\begin{lemma}\label{zero set}
For every countable family\/ $\calF$ of\/ $n$-variate real polynomials that are not identically zero, the union of their zero sets\/ $\bigcup_{f\in\calF}Z(f)$, where\/ $Z(f)=\{x\in\setR^n:f(x)=0\}$, has empty interior.
\end{lemma}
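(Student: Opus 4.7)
The plan is to show that each individual zero set $Z(f)$ for a non-identically-zero polynomial $f$ is both closed and nowhere dense in $\setR^n$, and then invoke the Baire category theorem to conclude that the countable union $\bigcup_{f\in\calF}Z(f)$ cannot contain any non-empty open set. Closedness of $Z(f)$ is immediate from continuity of $f$, so the only genuine content is the empty-interior statement for a single $Z(f)$; this is the step I would focus on, as the Baire conclusion is then standard.

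To prove that $Z(f)$ has empty interior whenever $f$ is not identically zero, I would proceed by induction on the number of variables $n$. The base case $n=1$ is immediate, since a non-zero univariate real polynomial has only finitely many roots. For the inductive step, write $f(x_1,\ldots,x_n)=\sum_{i=0}^{d}g_i(x_1,\ldots,x_{n-1})\,x_n^i$ and pick some index $i$ for which $g_i$ is not identically zero. Given any non-empty open box $U\subseteq\setR^n$, its projection $U'\subseteq\setR^{n-1}$ is open, and by the inductive hypothesis $Z(g_i)$ has empty interior, so there is a point $(a_1,\ldots,a_{n-1})\in U'$ with $g_i(a_1,\ldots,a_{n-1})\neq 0$. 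Then $f(a_1,\ldots,a_{n-1},x_n)$ is a non-zero univariate polynomial in $x_n$, hence vanishes only at finitely many values of $x_n$, so the open interval $\{(a_1,\ldots,a_{n-1})\}\times I\subseteq U$ (where $I$ is the corresponding slice) contains a point at which $f$ does not vanish. Therefore $U\not\subseteq Z(f)$, which shows that $Z(f)$ has empty interior.

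Finally, since $\setR^n$ is a complete metric space, the Baire category theorem implies that a countable union of closed nowhere-dense sets has empty interior. Applying this to the family $\{Z(f):f\in\calF\}$, each member of which is closed (by continuity) and nowhere dense (by the inductive argument above), yields that $\bigcup_{f\in\calF}Z(f)$ has empty interior, as claimed. There is no real obstacle here; the only mild subtlety is the inductive step for the single-polynomial case, and an alternative route would be to note that $Z(f)$ has Lebesgue measure zero and invoke that a set of Lebesgue measure zero cannot contain a non-empty open set, then use countable subadditivity of Lebesgue measure.
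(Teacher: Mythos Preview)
Your proof is correct and follows the same overall structure as the paper's: show each $Z(f)$ is closed with empty interior, then apply the Baire category theorem. The only difference lies in how you establish that a single $Z(f)$ has empty interior. You induct on the number of variables, expanding $f$ in the last variable and using the inductive hypothesis on a coefficient; the paper instead argues directly by restricting $f$ to the line through a putative interior point $x$ of $Z(f)$ and a point $y$ with $f(y)\neq 0$, obtaining a non-zero univariate polynomial that would have infinitely many roots near $t=0$. Both arguments reduce to the one-variable case and are equally valid; the paper's line-restriction trick avoids the induction and is marginally shorter, while your version (and the Lebesgue-measure alternative you mention) is perhaps more self-contained for a reader unfamiliar with that trick.
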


\begin{proof}
Fix $f\in\calF$.
Clearly, $Z(f)$ is a closed set in $\setR^n$.
First, observe that $Z(f)$ has empty interior.
Indeed, suppose to the contrary that there is a point $x$ in the interior of $Z(f)$.
Let $y\in\setR^n$ be such that $f(y)\neq 0$.
The univariate polynomial $f_{x,y}$ given by $f_{x,y}(t)=f(x+t(y-x))$ is not identically zero, because $f_{x,y}(1)=f(y)\neq 0$, so it has finitely many roots.
However, $f_{x,y}(t)=0$ whenever $|t|$ is sufficiently small for the point $x+t(y-x)$ to fall into an open neighborhood of $x$ contained in $Z(f)$.
There are infinitely many such values $t$, which is a contradiction.
Hence, $Z(f)$ has empty interior.
The lemma now follows by the Baire category theorem---a standard tool from topology, which asserts that a countable union of closed sets with empty interior in a complete metric space (such as $\setR^n$ with the Euclidean metric) has empty interior.
\end{proof}

Now, we are ready to prove \cref{Gallai v2}.

\begin{proof}[Proof of \cref{Gallai v2}]
We can assume without loss of generality that $\calF$ contains the polynomial $\delta$ defined by
\[\delta(p_1,\ldots,p_d,q_1,\ldots,q_d)=(p_1-q_1)^2+\cdots+(p_d-q_d)^2,\]
because distinct points $p,q\in\setR^d$ satisfy $\delta(p,q)\neq 0$.
Put $T=\{t_1,\dots,t_m\}$, where $m=|T|$.
Let $n$ and $H\subseteq[m]^n$ be as claimed in \cref{Hales} applied to $[m]$, $g$, and $k$.
For a given vector $\gamma=(\gamma_1,\ldots,\gamma_n)\in \setR^n$, define a map $\zeta_\gamma\colon H\to\setR^d$ by $\zeta_\gamma(x)=\sum_{i=1}^n\gamma_it_{x_i}$, and put $X_\gamma=\zeta_\gamma(H)=\{\zeta_\gamma(x):x\in H\}\subset\setR^d$.

We aim to find a vector $\gamma\in\setR^n$ with positive coordinates such that $\calF$ respects the set $X_\gamma$.
For any $f\in\calF$ and any distinct $x,y\in H$, let $F_{f,x,y}$ be the $n$-variate polynomial defined by
\[\textstyle F_{f,x,y}(\gamma_1,\ldots,\gamma_n)=f\bigl(\sum_{i=1}^n\gamma_it_{x_i},\;\sum_{i=1}^n\gamma_it_{y_i}\bigr).\]
Given $f\in\calF$ and distinct points $x,y\in H$, let $i\in[n]$ be an index such that $x_i\neq y_i$.
Setting $\gamma_i=1$ and $\gamma_j=0$ for $j\neq i$, we obtain $F_{f,x,y}(\gamma)=f(t_{x_i},t_{y_i})\neq 0$, which shows that $F_{f,x,y}$ is not identically zero.
Apply \cref{zero set} to the family $\{F_{f,x,y}:f\in\calF$, $x,y\in H$, $x\neq y\}$ to conclude that the union $Z=\bigcup_{f\in\calF,\,x,y\in H,\,x\neq y}Z(F_{f,x,y})$ of the zero sets of the polynomials~$F_{f,x,y}$ has empty interior, that is, the set $U\setminus Z$ is non-empty for every non-empty open set $U\subseteq\setR^n$.
In particular, there is a vector $\gamma$ with positive coordinates that is not in~$Z$.
Thus $F_{f,x,y}(\gamma)\neq 0$ for all $f\in\calF$ and all distinct $x,y\in H$, so $\calF$ respects the set $X_\gamma$.

Fix such a vector $\gamma$, and let $\zeta=\zeta_\gamma$ and $X=X_\gamma$.
Condition~1 thus follows.
Furthermore, by our assumption that $\delta\in\calF$, we have $\delta(\zeta(x),\zeta(y))=F_{\delta,x,y}(\gamma)\neq 0$ for any distinct $x,y\in H$, which shows that $\zeta$ is injective.

Let $\calL$ be the set of combinatorial lines that are contained in $H$.
Every combinatorial line $L\in\calL$ gives rise to a homothetic copy of $T$ in $X$ as follows: if $I$ is the set of active coordinates of $L$ and the coordinates $i\notin I$ are fixed to $x_i$ in $L$, then the set
\[\textstyle\zeta(L)=\{\zeta(x):x\in L\}=\bigl\{\sum_{i\notin I}\gamma_it_{x_i}+\bigl(\sum_{i\in I}\gamma_i\bigr)t_j:j\in[m]\bigr\}\]
is a homothetic copy of $T$.
Specifically, we have $\zeta(L)=h(T)$ for the homothetic map $h$ given by $h(p)=p^*+\lambda p$ with $p^*=\sum_{i\notin I}\gamma_it_{x_i}$ and $\lambda=\sum_{i\in I}\gamma_i>0$.
Let $\calT=\{\zeta(L):L\in\calL\}$.

We show that conditions 2 and~3 hold for $X$ and $\calT$.
Since no tuple of fewer than $g$ combinatorial lines in $\calL$ form a cycle and $\zeta$ is injective, no tuple of fewer than $g$ members of $\calT$ form a cycle, which is condition~2.
For the proof of condition~3, consider a $k$-coloring~$\phi$ of $X$.
It induces a $k$-coloring $x\mapsto\phi(\zeta(x))$ of $H$, in which, by \cref{Hales}, there is a monochromatic combinatorial line $L\in\calL$.
We conclude that the homothetic copy $\zeta(L)$ of $T$ in $\calT$ is monochromatic in $\phi$.
\end{proof}

The above proof method can also be used to prove other versions of Gallai's theorem with constraints.
For example, we can describe constraints using functions other than polynomials if they satisfy a suitable analogue of \cref{zero set}, e.g., real analytic functions (see~\cite{Mityagin20}).
In addition, we can use other versions of the Hales-Jewett theorem, e.g., the density Hales-Jewett theorem due to Furstenberg and Katznelson~\cite{FK91}, which asserts that for any $m\in\setN$ and $\alpha>0$, there is $n\in\setN$ such that every subset of $[m]^n$ of size at least $\alpha m^n$ contains a combinatorial line.
Then, the same proof leads to the following result.

\begin{theorem}
Let\/ $T$ be a finite subset of\/ $\setR^d$, let\/ $\calF$ be a countable family of\/ $2d$-variate real analytic functions that respects\/ $T$, and let\/ $\alpha>0$.
Then there exists a finite set\/ $X\subset\setR^d$ such that\/ $\calF$ respects\/ $X$ and every subset of\/ $X$ of size at least\/ $\alpha|X|$ contains a homothetic copy of\/ $T$.
\end{theorem}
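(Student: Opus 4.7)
The plan is to mirror the proof of \cref{Gallai v2} almost verbatim, with two substitutions: replace \cref{Hales} by the density Hales--Jewett theorem of Furstenberg and Katznelson, and replace \cref{zero set} by its analogue for real analytic functions. As in the earlier proof, I would first enlarge $\calF$ by the distance polynomial $\delta(p,q)=\sum_{i=1}^d(p_i-q_i)^2$ (harmless, since $\delta$ respects $T$), which will yield injectivity of the map built below.

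Write $T=\{t_1,\ldots,t_m\}$ and let $n\in\setN$ be the dimension provided by the density Hales--Jewett theorem applied to $m$ and $\alpha$, so every subset of $[m]^n$ of size at least $\alpha m^n$ contains a combinatorial line. For $\gamma=(\gamma_1,\ldots,\gamma_n)\in\setR^n$, define $\zeta_\gamma\colon[m]^n\to\setR^d$ by $\zeta_\gamma(x)=\sum_{i=1}^n\gamma_it_{x_i}$, and for $f\in\calF$ and distinct $x,y\in[m]^n$ set
\[\textstyle F_{f,x,y}(\gamma)=f\bigl(\sum_{i=1}^n\gamma_it_{x_i},\;\sum_{i=1}^n\gamma_it_{y_i}\bigr).\]
Each $F_{f,x,y}$ is real analytic in $\gamma$, being a composition of the real analytic $f$ with polynomial coordinates. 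The non-vanishing argument from the proof of \cref{Gallai v2} carries over: picking any index $i$ with $x_i\neq y_i$ and setting $\gamma_i=1$, $\gamma_j=0$ for $j\neq i$, one obtains $F_{f,x,y}(\gamma)=f(t_{x_i},t_{y_i})\neq 0$ since $\calF$ respects $T$; hence $F_{f,x,y}$ is not identically zero.

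The one new ingredient, and the only potential obstacle, is the analytic version of \cref{zero set}: the zero set $Z(F)\subset\setR^n$ of a nonzero real analytic function $F$ is closed and has empty interior. This is classical (and is referenced in the discussion preceding the theorem via~\cite{Mityagin20}); it follows by induction on $n$ from the one-variable identity theorem, exactly in parallel with the polynomial case of \cref{zero set}. Given this, the Baire category theorem applied to the countable family $\{F_{f,x,y}:f\in\calF,\;x,y\in[m]^n,\;x\neq y\}$ produces a vector $\gamma\in\setR^n$ with positive coordinates that lies in none of the zero sets $Z(F_{f,x,y})$.

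Fix such a $\gamma$ and set $\zeta=\zeta_\gamma$, $X=\zeta([m]^n)$. By construction $\calF$ respects $X$, and because $\delta\in\calF$ is respected, $\zeta$ is injective, so $|X|=m^n$. For the density conclusion, let $Y\subseteq X$ have $|Y|\geq\alpha|X|=\alpha m^n$; then $\zeta^{-1}(Y)\subseteq[m]^n$ has size at least $\alpha m^n$, so by density Hales--Jewett it contains a combinatorial line $L$. Exactly as in the proof of \cref{Gallai v2}, if $I$ is the active index set of $L$ and the passive coordinates of $L$ are fixed to $x_i$, then $\zeta(L)=h(T)$ for the homothetic map $h(p)=\sum_{i\notin I}\gamma_it_{x_i}+\bigl(\sum_{i\in I}\gamma_i\bigr)p$ with $\sum_{i\in I}\gamma_i>0$, so $\zeta(L)\subseteq Y$ is a homothetic copy of $T$, as required.
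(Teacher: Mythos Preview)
Your proposal is correct and matches the paper's intended argument exactly: the paper does not spell out a separate proof but states that ``the same proof leads to the following result'' after noting the two substitutions (density Hales--Jewett in place of \cref{Hales}, and the real-analytic analogue of \cref{zero set} via~\cite{Mityagin20}), which is precisely what you carry out. Your additional care in verifying injectivity of $\zeta$ so that $|X|=m^n$ (needed to translate the density hypothesis on $X$ back to $[m]^n$) is the one detail worth making explicit, and you handle it correctly.
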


\section{Proof of Theorem \ref{theta} for \texorpdfstring{$\theta>0$}{θ>0}}
\label{sec:theta}

In this section, we prove \cref{theta} for all $\theta\in(0,\nicefrac{\pi}{2}]$.
Here is the precise statement.

\begin{theorem}\label{theta not 0}
For every\/ $\theta\in(0,\nicefrac{\pi}{2}]$ and any integers\/ $g\geq 3$ and\/ $k\geq 1$, there is a collection of circles\/ $\calC$, no two concentric, such that the\/ $\theta$-graph\/ $G_\theta(\calC)$ has girth at least\/ $g$ and chromatic number at least\/ $k$.
\end{theorem}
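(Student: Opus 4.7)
The plan is to apply Theorem~\ref{Gallai v2} with $d = 3$, where each point $p = (p_x, p_y, p_z) \in \setR^3$ with $p_z > 0$ is interpreted as the circle of radius $p_z$ centered at $(p_x, p_y)$. I will choose the template $T \subset \setR^3$ to have size at least three, with one designated pair $(t_1, t_2)$ whose circles intersect at angle $\theta$, and with the remaining points placed generically. The polynomial family $\calF$ will contain $\eta(p, q) := (p_x - q_x)^2 + (p_y - q_y)^2$, whose non-vanishing on pairs in $X$ forbids two circles of $X$ from sharing a center (so in particular no two are concentric), together with additional polynomials chosen to preclude various unwanted coincidences among points of $X$.

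Applying Theorem~\ref{Gallai v2} with $T$, $\calF$, $g$, and $k$ yields a finite set $X \subset \setR^3$ and a collection $\calT$ of homothetic copies of $T$ in $X$; let $\calC$ be the corresponding collection of circles. The condition that $\calF$ respects $X$ ensures that no two circles of $\calC$ are concentric. Any cycle of length $\ell$ in $G_\theta(\calC)$ will induce a cycle of length $\ell$ of distinct copies in $\calT$, by associating each edge of the cycle with the copy that realizes it as its designated pair, so Theorem~\ref{Gallai v2} forces $\ell \geq g$. Any $k$-coloring of $\calC$ produces a monochromatic copy in $\calT$, whose designated pair is a monochromatic edge of $G_\theta(\calC)$; hence $\chi(G_\theta(\calC)) \geq k+1$.

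The main subtlety---and the key difference from the case $\theta = 0$ treated later in the paper---is that we must verify both (i) the designated pair of every copy in $\calT$ really does intersect at angle $\theta$, and (ii) no pair of circles in $\calC$ that is not a designated pair of some copy accidentally intersects at angle $\theta$ and so creates an extra short cycle. A general homothety $h(p) = p^* + \lambda p$ in $\setR^3$ scales the distance between two centers by $\lambda$ but acts on radii by $r \mapsto p^*_z + \lambda r$, and one computes that the intersection-angle condition $|c_1 - c_2|^2 = r_1^2 + r_2^2 + 2 r_1 r_2 \cos\theta$ is preserved under $h$ precisely when $p^*_z = 0$; this is not automatically enforced by Theorem~\ref{Gallai v2}. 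To handle both (i) and (ii) I plan to design $T$ and $\calF$ so that, on the Hales--Jewett-based construction of $\calT$ in the proof of Theorem~\ref{Gallai v2}, the relevant polynomial conditions force the homotheties underlying the copies in $\calT$ to satisfy $p^*_z = 0$ and simultaneously rule out accidental angle-$\theta$ pairs. The hard part will be identifying the correct polynomial constraints; the fact that for $\theta > 0$ the angle condition cuts out a smooth algebraic hypersurface (rather than the degenerate tangency hypersurface for $\theta = 0$) is what makes this considerably simpler than the argument needed for \cref{main}.
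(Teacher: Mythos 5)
Your proposal correctly pinpoints the central technical obstruction---that a homothety $h(p)=p^*+\lambda p$ of $\setR^3$ carries the angle-$\theta$ condition
\[
(p_x-q_x)^2+(p_y-q_y)^2=p_z^2+q_z^2+2p_zq_z\cos\theta
\]
to the corresponding condition on the images only when $p^*_z=0$---but the plan for overcoming it does not go through, and the paper in fact takes an entirely different route precisely to avoid this obstruction. There are two concrete problems.

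First, the constraint $p^*_z=0$ cannot be enforced via \cref{Gallai v2}, either as a black box or by opening up its proof. The polynomial family $\calF$ controls only non-vanishing on pairs of points of $X$; it has no handle on the homotheties producing $\calT$. If one peeks inside the Hales--Jewett construction, the homothety for a combinatorial line $L$ with inactive coordinates $i\notin I$ has $p^*_z=\sum_{i\notin I}\gamma_i(t_{x_i})_z$, where the $\gamma_i$ are chosen \emph{positive} and every $(t_j)_z$ is a radius, hence positive. Thus $p^*_z>0$ for every copy in $\calT$ except possibly the single fully active line. Moreover, the machinery of \cref{zero set} only lets one pick $\gamma$ \emph{avoiding} countably many zero sets; it cannot force $\gamma$ to \emph{satisfy} an exact equality, which is a codimension-one (and here, in fact, empty) constraint.

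Second, even if (i) could somehow be arranged, the polynomial $f_\theta$ expressing the angle-$\theta$ condition vanishes on the designated pair $(t_1,t_2)\subset T$, so it does not respect $T$ and therefore cannot be placed in $\calF$. Thus \cref{Gallai v2} gives you no tool to exclude accidental angle-$\theta$ pairs, which is exactly what you need for the girth bound. (If you instead chose $T$ with no pair at angle $\theta$ so that $f_\theta$ respects $T$, then no pair of $X$ would be at angle $\theta$ and $G_\theta(\calC)$ would be edgeless.)

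The paper sidesteps the whole issue of $\setR^3$ homotheties distorting angles by working in $\setR^1$ and then applying an inversion. It proceeds by induction on $k$: given $\calC_k$, it records the $y$-coordinates of the horizontal lines meeting each circle of $\calC_k$ at angle $\theta$ to form a one-dimensional template $T_0\subset\setR$, applies the ordinary sparse Gallai theorem (\cref{Gallai}, not \cref{Gallai v2}) in $\setR^1$, reinterprets the resulting set as a family of horizontal lines, attaches a scaled/translated copy of $\calC_k$ to each homothetic copy of $T_0$, and finally applies a circle inversion to turn the lines into circles while preserving all intersection angles. Since homotheties of $\setR$ acting on $y$-coordinates correspond exactly to similarities of the plane, the angle condition is preserved without any extra constraint---this is the key simplification that your $\setR^3$ approach lacks.
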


\begin{proof}
We fix $\theta\in(0,\nicefrac{\pi}{2}]$ and $g\geq 3$, and construct families of circles by induction on $k$.
For the base case $k\leq 3$, we represent odd cycles as $\theta$-graphs of circles.
To represent the cycle~$C_n$, consider a regular $n$-gon of sidelength $s\in(\sqrt{2},2]$, and place unit circles with centers at its corners; see~\cref{fig:warmUp3}.
\begin{figure}[b]
    \centering
    \includegraphics[page=8]{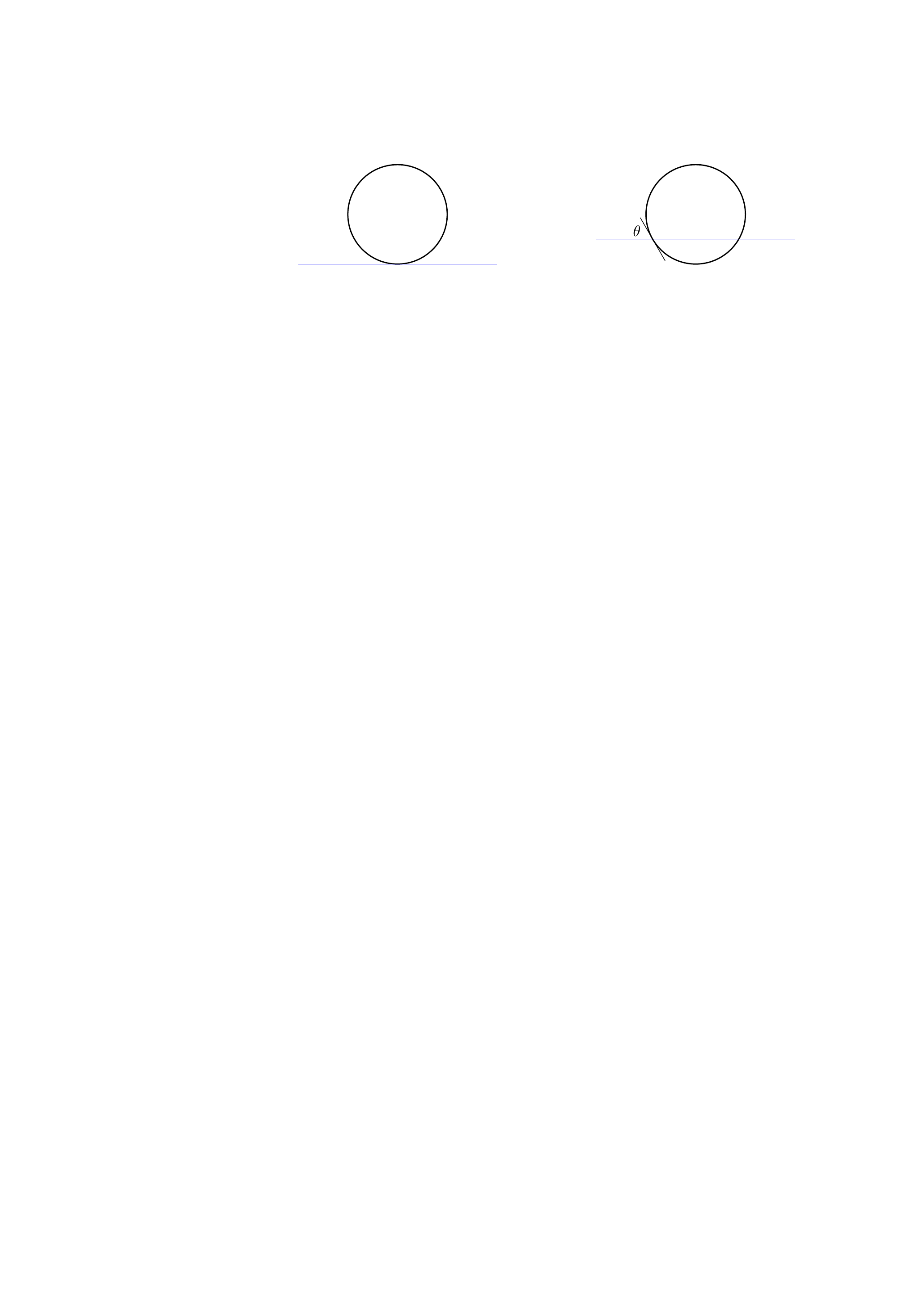}
    \caption{In a regular $n$-gon of sidelength $s$, the circles intersect at an angle of $\theta=\pi-2\arcsin(\nicefrac{s}{2})$.}
    \label{fig:warmUp3}
\end{figure}
Then circles at neighboring corners intersect at an angle of $\theta=\pi-2\arcsin(\nicefrac{s}{2})$, which continuously changes from $\nicefrac{\pi}{2}$ for $s=\sqrt{2}$ to $0$ for $s=2$; here we exploit the fact that the tangents at the intersection points are orthogonal to the radii.
Additionally, it is easy to verify that no other pairs of circles intersect.
Hence, for an appropriate choice of~$s$, the $\theta$-graph of the family of circles is $C_n$.

For the induction step, assume we have already constructed a family of circles $\calC_k$, no two concentric, such that the $\theta$-graph $G_\theta(\calC)$ has girth at least $g$ and chromatic number at least $k$, where $k\geq 3$.
Let $\Line(y)$ denote the horizontal line at coordinate $y\in\setR$, that is, $\Line(y)=\setR\times\{y\}$.
To construct a family $\calC_{k+1}$, we perform the following process.

\begin{enumerate}
    \item \emph{Constructing a ``template'' set from the family\/ $\calC_k$.}
    For each circle $C\in\calC_k$, we pick all horizontal lines that intersect $C$ at angle~$\theta$ (meaning that the angle between the horizontal line and the tangent line to $C$ at either of the intersection points is $\theta$); see \cref{fig:warmUp1}.
    There are two such lines when $\theta\in(0,\nicefrac{\pi}{2})$ and only one (through the center of the circle) when $\theta=\nicefrac{\pi}{2}$.
    By slightly rotating the family $\calC$ if needed, we can guarantee that these lines are all distinct.
    When $\theta=\nicefrac{\pi}{2}$, this requires the additional assumption that no two circles in $\calC$ are concentric (which is otherwise superfluous).
    Let $T_0\subset\setR$ be the set of $y$-coordinates of the lines.
    \begin{figure}[htb]
        \centering
        \includegraphics[page=2]{figures/warmUp.pdf}
        \caption{Construction of the set $T_0$ for $\theta=\nicefrac{\pi}{2}$.}
        \label{fig:warmUp1}
    \end{figure}

    \item \emph{Applying Gallai's theorem.}
    \cref{Gallai} in $\setR$ applied to the set $T_0$ yields a finite set $Y\subset\setR$ such that every $k$-coloring of $Y$ contains a monochromatic homothetic copy of $T_0$ and no tuple of fewer than $\lceil\nicefrac{g}{2}\rceil$ homothetic copies of $T_0$ in $Y$ form a cycle.
    
    \item \emph{Geometric interpretation of the resulting set.}
    The set $Y$ gives rise to the family of horizontal lines $\calL'=\{\Line(y):y\in Y\}$.
    Let $\calT$ be the family of homothetic copies of $T_0$ in $Y$.
    
    \item \emph{Attaching a copy of\/ $\calC_k$ for each homothetic copy of the ``template''.}
    For each $T\in\calT$, we consider the set of horizontal lines $\calL'_T=\{\Line(y):y\in T\}$ and construct a homothetic copy~$\calC'_T$ of $\calC_k$ such that each line in $\calL'_T$ intersects a single circle $\calC'_T$ at angle $\theta$; see \cref{fig:warmUp2}.
    (Each circle in $\calC'_T$ intersects two lines in $\calL'_T$ at angle $\theta$ when $\theta\in(0,\nicefrac{\pi}{2})$ and only one when $\theta=\nicefrac{\pi}{2}$.)
    This is possible because the set of lines $\{\Line(y):y\in T_0\}$ has this property with respect to $\calC_k$, and $T$ is a homothetic copy of $T_0$.
    We spread the copies $\calC'_T$ horizontally so that a vertical line separates $\calC'_{T_1}$ from $\calC'_{T_2}$ for any distinct $T_1,T_2\in\calT$.
    Let $\calC'=\bigcup_{T\in\calT}\calC'_T$.

    \begin{figure}[htb]
        \centering
        \includegraphics[page=10]{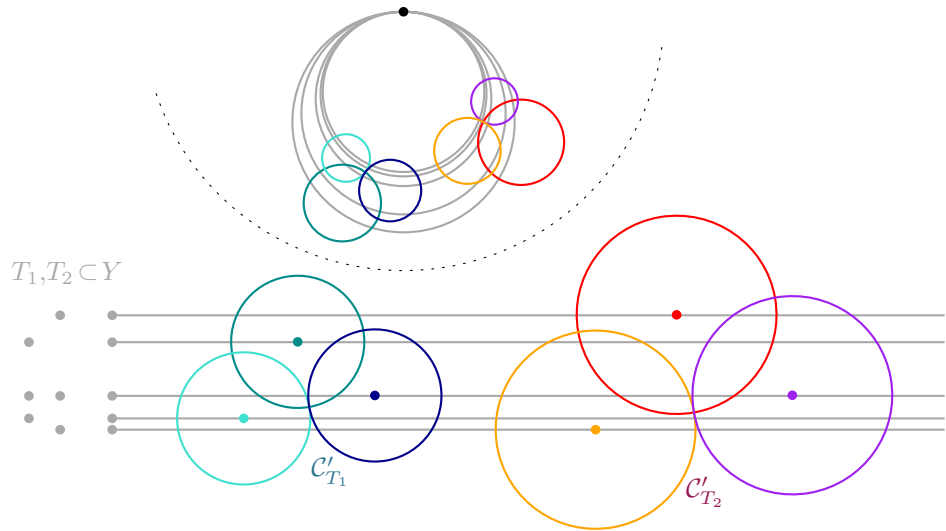}
        \caption{Illustration for steps 3--5 in the proof of \cref{theta not 0} for $\theta=\nicefrac{\pi}{2}$:
        Construction of a preliminary family from $Y$ and the result after inversion with respect to the dotted circle.
        Note that the set $Y$ consists of two homothetic copies of the set $T_0$ from \cref{fig:warmUp1} that have one element in common.}
        \label{fig:warmUp2}
    \end{figure}

    \item \emph{Constructing the final family\/ $\calC_{k+1}$ via inversion.}
    Finally, we construct the family $\calC_{k+1}$ by applying a geometric inversion to the lines and circles in $\calL'\cup\calC'$, where the center of inversion is chosen not to lie on any of these lines or circles.
    See \cref{fig:warmUp2} for an illustration.
    By basic properties of inversion, the resulting family consists only of circles~\cite[Chapter~6]{Coxeter61}; in particular, the lines in $\calL'$ turn into a family of circles that are all tangent at the center of inversion.
    To ensure that the inversion does not create concentric circles, we choose the center of inversion not to lie on any line passing through the centers of two circles in $\calC'$ or any vertical line passing through the center of a circle in $\calC'$.
\end{enumerate}

We claim that the $\theta$-graph $G_\theta(\calC_{k+1})$ has girth at least $g$ and chromatic number at least~$k+1$.
Since inversion preserves angles, this graph is isomorphic to the $\theta$-graph of $\calL'\cup\calC'$ (which is defined analogously to the $\theta$-graph for a collection of circles).
Let $G$ denote the latter $\theta$-graph.
It is thus sufficient to prove that $G$ has girth at least $g$ and chromatic number at least $k+1$.

To this end, we observe that by the construction, $G$ has the following structure: for every~$T\in\calT$, the subgraph of $G$ induced on the vertices in $\calC'_T$ is isomorphic to $G_\theta(\calC_k)$, and the remaining edges form a collection of bipartite subgraphs between the vertices in $\calC'_T$ and the vertices in $\calL'_T$, where each vertex in $\calC'_T$ is adjacent to two corresponding vertices in $\calL'_T$ if $\theta\in(0,\nicefrac{\pi}{2})$ and only one if $\theta=\nicefrac{\pi}{2}$.

We exploit the structure above in the proofs of the final two claims.
They are standard when applying generalizations of Tutte's construction; see, e.g., \cite{Davies21,KN99}.

\begin{subclaim}\label{claim:girth-theta}
The graph $G$ has girth at least $g$.
\end{subclaim}

\begin{claimproof}
For every $T\in\calT$, every cycle in $G$ that lies entirely within $\calC'_T$ has length at least $g$, because the subgraph of $G$ induced on the vertices in $\calC'_T$ is isomorphic to $G_\theta(\calC_k)$, the girth of which is at least $g$ by the induction hypothesis.
It thus remains to consider a cycle in $G$ of length $\ell\geq 3$ that does not lie entirely within $\calC'_T$ for any $T\in\calT$.
It must contain vertices from $\calL'$, say, $L_1,\ldots,L_m$ in this order along the cycle.
For each $i\in[m]$, since $L_i$ has no edges to the rest of $\calL'$ and at most one edge to $\calC'_T$ for each $T\in\calT$, the neighbors of $L_i$ on the cycle lie in two different sets of the form $\calC'_T$.
For each $i\in[m]$, let $T_i\in\calT$ be such that the part of the cycle between $L_i$ and $L_{i+1}$ (or $L_1$ when $i=m$) lies within $\calC'_{T_i}$.

It follows that $(T_1,\ldots,T_m)$ is a cycle in $\calT$ of length $m$ or contains such a smaller cycle if some members of $\calT$ repeat among $T_1,\ldots,T_m$.
Hence, \cref{Gallai} yields $m\geq\lceil\nicefrac{g}{2}\rceil$.
Since the cycle contains at least one vertex from $\calC'$ between $L_i$ and $L_{i+1}$ (or $L_1$ when $i=m$) for any $i\in[m]$, we conclude that $\ell\geq 2m\geq g$.
\end{claimproof}

\begin{subclaim}\label{claim:chromatic number-theta}
The graph $G$ has chromatic number at least $k+1$.
\end{subclaim}

\begin{claimproof}
Suppose for the sake of contradiction that the graph $G$ is $k$-colorable.
Pick a proper $k$-coloring of $G$, and consider its restriction to the vertices in $\calL'$.
It induces a $k$-coloring of $Y$ via the correspondence $Y\ni y\leftrightarrow\Line(y)\in\calL'$.
It follows from the application of \cref{Gallai} that there is a monochromatic homothetic copy $T$ of $T_0$ in $\calT$, which means that the set of lines $\calL'_T$ is monochromatic.
Since the edges of $G$ that connect these lines with $\calC'_T$ match all of $\calC'_T$, their common color does not occur on the circles in $\calC'_T$.
Therefore, the given $k$-coloring of $G$ induces a proper $(k-1)$-coloring of the subgraph of $G$ induced on the vertices in $\calC'_T$, which is isomorphic to $G_\theta(\calC_k)$.
This contradicts the assumption that the graph $G_\theta(\calC_k)$ has chromatic number at least $k$.
\end{claimproof}

This completes the proof of \cref{theta not 0} by induction.
\end{proof}

Observe that the proof above cannot be used for $\theta=0$ to prove \cref{main}, because the inversion at step~5 turns all lines in $\calL'$ into circles tangent at one point, so the resulting collection of circles is not a constellation (and the resulting tangency graph has girth $3$).

\section{Proof of Theorem \ref{main}}
\label{sec:main}

In this section, we prove \cref{main}.
For the purpose of induction, we prove the following stronger statement, which directly implies \cref{main}.

\begin{theorem}\label{main induction}
For any integers\/ $g\geq 3$ and\/ $k\geq 1$, there exists a collection of circles\/ $\calC$, no two concentric and no two internally tangent, such that the tangency graph\/ $G(\calC)$ has girth at least\/ $g$ and chromatic number at least\/ $k$.
\end{theorem}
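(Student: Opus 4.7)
The proof is by induction on $k$, following the five-step scheme of the proof of \cref{theta not 0}. The base cases for small $k$ are straightforward (e.g., a cycle of length at least $g$ realized as a chain of externally tangent circles). For the inductive step, given $\calC_k$ with the required properties, we build $\calC_{k+1}$ as follows: pick a template set $T_0$ of auxiliary data attached to $\calC_k$, apply a sparse Gallai-type result to obtain a larger set $X$, interpret $X$ as a family of ``base'' circles in the plane, and for each homothetic copy $T\in\calT$ of $T_0$ in $X$ attach a rescaled copy $\calC_k^T$ of $\calC_k$ matched bijectively with the base circles of $T$ so that each circle of $\calC_k^T$ is externally tangent to its assigned base circle. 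We then take $\calC_{k+1}:=X\cup\bigcup_{T\in\calT}\calC_k^T$. The chromatic-number and girth arguments are exact analogues of those in the proof of \cref{theta not 0}: a $k$-coloring of $X$ yields, via \cref{Gallai v2}, a monochromatic copy $T\in\calT$, whence the matched copy $\calC_k^T$ must avoid the common color and is consequently $(k-1)$-colored, contradicting the inductive hypothesis; and any short cycle in $\calC_{k+1}$ either lies inside a single $\calC_k^T$ (forbidden by induction) or induces a short cycle among the copies in $\calT$ (forbidden by the sparseness clause of \cref{Gallai v2}).

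The crucial contrast with the $\theta>0$ case is that there the base objects were horizontal \emph{lines}, converted into circles only at the very end by one inversion. For $\theta=0$ this last step fails: an inversion maps any family of lines to a family of circles all tangent at one point, violating the constellation property. Hence the base objects must be circles from the outset, and we must actively forbid the many ``unwanted'' tangencies that could otherwise arise. This is exactly the setting of \cref{Gallai v2}. We take $d=3$ and identify each point $(x,y,r)\in\setR^3$ with the circle of radius $r$ centered at $(x,y)$. For every $C\in\calC_k$ we select a generically placed small external tangent circle $B_C$, and let $T_0\subset\setR^3$ consist of the triples representing these $B_C$'s. The polynomial family $\calF$ comprises the squared distance between centers (forbidding concentric pairs), the polynomial $(x_1-x_2)^2+(y_1-y_2)^2-(r_1-r_2)^2$ (forbidding internal tangency), and the external-tangency polynomial $(x_1-x_2)^2+(y_1-y_2)^2-(r_1+r_2)^2$ for every pair of base circles that should \emph{not} be tangent. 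Applying \cref{Gallai v2} gives $X$ and $\calT$ satisfying both the Gallai-sparseness conditions and the non-vanishing of every polynomial in $\calF$ on pairs of distinct points of $X$.

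\textbf{Main obstacle.} The central geometric difficulty is that a homothety of $\setR^3$ does \emph{not} preserve external tangency of circles: under the map $(x,y,r)\mapsto(x^*+\lambda x,\,y^*+\lambda y,\,r^*+\lambda r)$ the radius coordinate is translated by $r^*$ in addition to being rescaled, and the equation $(x_1-x_2)^2+(y_1-y_2)^2=(r_1+r_2)^2$ fails whenever $r^*\neq 0$. Consequently, the homothetic image $h(T_0)\subset\setR^3$ does not represent a configuration of circles similar to that of $T_0$, and we cannot simply take $\calC_k^T$ to be the $\setR^3$-homothetic image of $\calC_k$; instead, its tangencies to the base circles of $T$ must be realized by an independent planar construction whose radii compensate for the $r^*$-shift. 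Carrying out this realization for every $T\in\calT$ simultaneously without creating unwanted tangencies---between distinct copies $\calC_k^T$ and $\calC_k^{T'}$, or between a copy and an unmatched base circle---is the hardest part of the argument, and it is precisely what the polynomial framework of \cref{Gallai v2} makes possible: every ``bad'' tangency is cut out by an explicit polynomial in the $(x,y,r)$ coordinates of a pair of base circles, and by packing all such polynomials into $\calF$ they are excluded simultaneously.
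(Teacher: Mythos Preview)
Your proposal captures the inductive scheme and correctly identifies the central obstacle: a homothety of $\setR^3$ shifts the radius coordinate by $r^*$ and therefore destroys external tangency. But the resolution you sketch --- packing \emph{all} unwanted tangencies into $\calF$ and letting \cref{Gallai v2} kill them --- does not work as stated, and it is not what the paper does.

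The difficulty is this. \cref{Gallai v2} controls polynomials evaluated at \emph{pairs of points of $X$}. The attached copies $\calC_k^T$ are not points of $X$; their positions depend on how you attach them, and in particular (because of the $r^*$-shift you flagged) any reasonable attachment rule makes each small circle depend on the homothety data $(\lambda_T,r^*_T)$ of the whole copy $T$, not just on its matched base point. So a tangency between a small circle of $\calC_k^T$ and an unmatched base circle, or between small circles of $\calC_k^T$ and $\calC_k^{T'}$, is not a polynomial condition on a pair $(p,q)\in X^2$, and hence cannot be placed in $\calF$. Your final paragraph asserts that it can, but never says how; the phrase ``an independent planar construction whose radii compensate for the $r^*$-shift'' is exactly the missing idea.

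The paper's mechanism is different. Only \emph{two} polynomials go into $\calF$ (concentricity and internal tangency). Everything else is handled by free parameters chosen \emph{after} $X$ is fixed: the base circles are given radius $R-r'$ (not $r'$), the small circles of copy $T$ are placed along a direction $\phi_T$ at distance $R-r^*_T$ from their matched base centers, with the $\phi_T$ distinct and generic relative to the center differences in $X$. Unwanted tangencies between large circles, between copies at different angles, and between small and unmatched large circles then become polynomial equations in the single variable $R$ of degree at most $1$ (this is \cref{tangency polynomial}), so they hold for at most finitely many $R$; choosing $R$ large and avoiding these finitely many values finishes the job. The $R-r'$ trick is what makes the small-to-large tangency reduce to an identity in $R$ precisely when the small circle is matched correctly, which is the substitute for the compatibility that the $\setR^3$-homothety fails to provide.
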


For the sake of clarity, we first present the construction of the families $\calC$ and then we prove that the construction satisfies the requirements of the theorem.

\subsection{High-level description of the construction}

We fix $g\geq 3$ and prove the theorem by induction on $k$.
The base case $k\leq 3$ is easy, because all odd cycles can be represented as tangency graphs of circles satisfying the conditions of the theorem.
For instance, a cycle $C_n$ can be represented by unit circles whose centers are placed at the corners of a regular $n$-gon of sidelength $2$.

For the induction step, assume we have already constructed a family of circles $\calC_k$, no two concentric and no two internally tangent, such that the tangency graph $G(\calC_k)$ has girth at least $g$ and chromatic number at least $k$.
Let $\Circle(x,y,r)$ denote the circle with center $(x,y)\in\setR^2$ and radius $r>0$.
To construct a family $\calC_{k+1}$ with girth $g$ and chromatic number $k+1$, we perform the following process.
See \cref{fig:constructionMain} for an illustration.

\begin{figure}[htb]
    \centering
    \includegraphics[page=9]{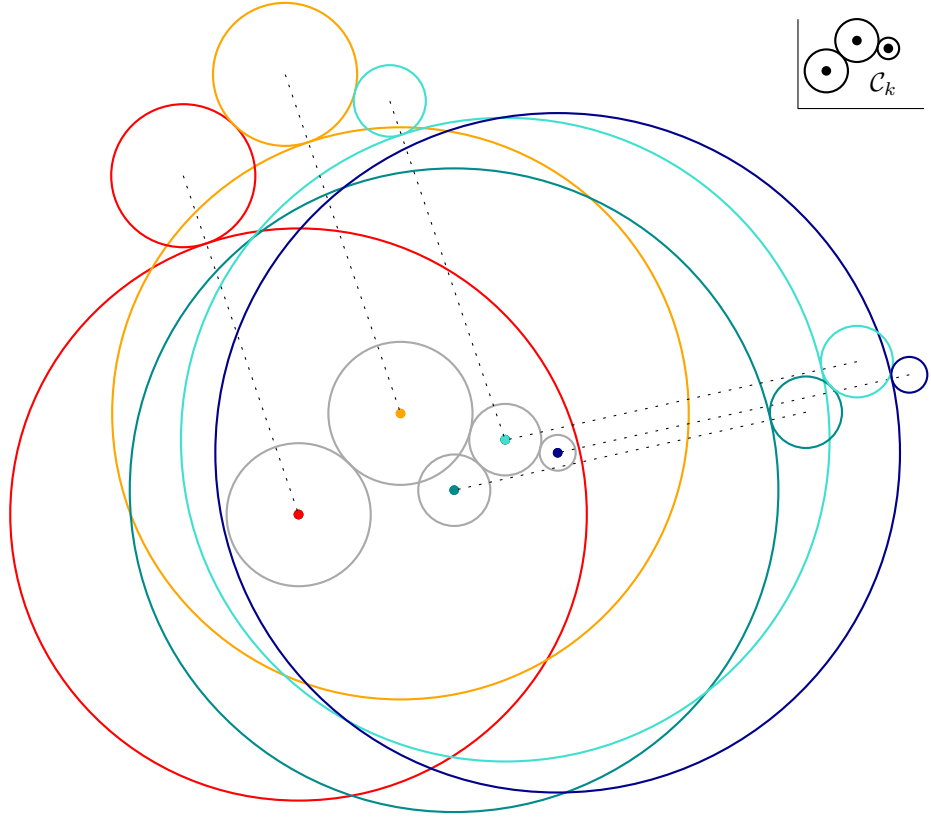}
    \caption{Illustration for the construction of the family $\calC_{k+1}$ from $\calC_k$.
    Gray circles represent a part of the set $X\subset\setR^3$ containing two homothetic copies of $T_0$ in $\calT$, both such that $r^*_T=0$ (for clarity of the illustration).
    The family $\calC_{k+1}$ contains a (large) circle for each point in $X$ and a homothetic copy of $\calC_k$ for each homothetic copy of $T_0$ in $\calT$.}
    \label{fig:constructionMain}
\end{figure}

\begin{enumerate}
    \item \emph{Constructing a ``template'' set from the family\/ $\calC_k$.}
    We represent each circle $\Circle(x,y,r)\in\calC_k$ by the point $(x,y,r)\in\setR^3$, to obtain the set
    \[T_0=\bigl\{(x,y,r)\in\setR^3:\Circle(x,y,r)\in\calC_k\bigr\}\subset\setR^3.\]
    
    \item \emph{Applying Gallai's theorem with constraints.}
    \cref{Gallai v2} in $\setR^3$ applied to the set $T_0$ with appropriate constraints to be detailed below yields a finite set $X\subset\setR^3$ and a collection~$\calT$ of homothetic copies of $T_0$ in $X$ such that every $k$-coloring of $X$ contains a monochromatic homothetic copy of $T_0$ in~$\calT$ and no tuple of fewer than $\lceil\nicefrac{g}{3}\rceil$ homothetic copies of $T_0$ in~$\calT$ form a cycle.
    For each $T\in\calT$, let $h_T$ be the homothetic map from $T_0$ to $T$ in $\setR^3$.
    It has the following form for some $x^*_T,y^*_T,r^*_T\in\setR$ and $\lambda_T>0$:
    \[h_T\colon\setR^3\ni(x,y,r)\mapsto\bigl(x^*_T+\lambda_Tx,\;y^*_T+\lambda_Ty,\;r^*_T+\lambda_Tr\bigr)\in\setR^3.\]
    
    \item \emph{Geometric interpretation of the resulting set.}
    Let $R_0=\max\{r':(x',y',r')\in X\}$, and let~$R\in\setR$ satisfy $R>R_0$.
    (In the sequel, $R$ is going to be ``large''.)
    The set $X$ gives rise to a family of ``large'' circles $\calC'_R$, parameterized by $R$, defined as follows:
    \[\calC'_R=\bigl\{\Circle(x',y',R-r'):(x',y',r')\in X\bigr\}.\]
    The use of the stronger \cref{Gallai v2} with appropriate constraints, rather than \cref{Gallai}, allows us to infer that no two circles in $\calC'_R$ are concentric or internally tangent.
    
    \item \emph{Attaching a copy of\/ $\calC_k$ for each homothetic copy of the ``template''.}
    We pick a set $\{\phi_T\}_{T\in\calT}$ of distinct angles in $[0,\pi)$ that satisfy a certain condition to be detailed below.
    For every $T\in\calT$ and every circle $C=\Circle(x,y,r)\in\calC_k$, we define the following two circles, where $(x',y',r')=h_T(x,y,r)\in T$:
    \begin{align*}
    \mu_{R,T}(C)&=\Circle(x',y',R-r'),\quad\text{which is a circle in }\calC'_R,\\
    \nu_{R,T}(C)&=\Circle\bigl(x'+(R-r^*_T)\cos(\phi_T),\;y'+(R-r^*_T)\sin(\phi_T),\;\lambda_Tr\bigr).
    \end{align*}
    In words, $\mu_{R,T}(C)$ is a ``large'' circle with center $(x',y')$, and $\nu_{R,T}(C)$ is a ``small'' circle with center translated from $(x',y')$ in direction $\phi_T$, externally tangent to $\mu_{R,T}(C)$.
    For every $T\in\calT$, we set \[\calC'_{R,T}=\{\nu_{R,T}(C):C\in\calC_k\}.\]
    Since the angles $\phi_T$ are distinct and the radii of the circles $\nu_{R,T}(C)$ do not depend on $R$, when $R$ is sufficiently large, the circles $\calC'_{R,T_1}$ are disjoint from those in $\calC'_{R,T_2}$ for any distinct $T_1,T_2\in\calT$.
    
    \item \emph{Constructing the final family\/ $\calC_{k+1}$.}
    Finally, we define \[\calC''_R=\calC'_R\cup\bigcup_{T\in\calT}\calC'_{R,T}.\]
    We will show that $\calC_{k+1}:=\calC''_R$ satisfies all claimed properties if $R$ is sufficiently large.
\end{enumerate}

\paragraph*{Comparison with the construction in Section~\ref{sec:theta}}

This construction follows the general strategy of the construction for the $\theta$-graph presented in \cref{sec:theta}.
Notable differences result from the need to avoid multiple circles mutually tangent at one point, which arise in the last step of that construction when applying inversion.

\begin{enumerate}
    \item While in \cref{sec:theta}, we have $T_0\subset\setR$, here we have to resort to the more complex choice of $T_0\subset\setR^3$.
    
    \item While in \cref{sec:theta}, the standard ``sparse'' version of Gallai's theorem is sufficient, here we need the stronger version with constraints, to be able to infer that the resulting set of circles avoids concentricities and internal tangencies.
    
    \item The construction of ``large'' circles here is explicit and uses a parameter $R$ that must be chosen appropriately, while in \cref{sec:theta}, horizontal lines are used instead.
    
    \item The construction of ``small'' circles here is more involved and uses a set of parameters $\{\phi_T\}_{T\in\calT}$ that must be chosen appropriately.
    
    \item In \cref{sec:theta}, a final application of inversion is required to transform the horizontal lines into circles, which is no longer needed in the construction here.
\end{enumerate}

\noindent
The proof of validity of the construction is somewhat more complex, accordingly.

\subsection{Proof of Theorem \ref{main induction}}

In the proof, we use the following two simple lemmas in addition to \cref{Gallai v2}.

\begin{lemma}\label{tangency conditions}
Circles\/ $\Circle(x_1,y_1,r_1)$ and\/ $\Circle(x_2,y_2,r_2)$ are
\begin{itemize}
\item externally tangent if and only if\/ $(x_1-x_2)^2+(y_1-y_2)^2=(r_1+r_2)^2$,
\item internally tangent if and only if\/ $(x_1-x_2)^2+(y_1-y_2)^2=(r_1-r_2)^2$.
\end{itemize}
\end{lemma}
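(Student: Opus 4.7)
The statement is a routine fact from elementary plane geometry, and the plan is to reduce both equivalences to the familiar distance characterization of tangency, then square to obtain the polynomial identities in the lemma. Writing $d = \sqrt{(x_1-x_2)^2+(y_1-y_2)^2}$ for the Euclidean distance between the two centers, the goal is to prove that the two circles are externally tangent iff $d = r_1+r_2$ and internally tangent iff $d = |r_1-r_2|$; squaring is then valid because $d \geq 0$ and both right-hand sides are nonnegative, and yields exactly $(r_1+r_2)^2$ and $(r_1-r_2)^2$ respectively.

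To justify the distance characterization, I would translate and rotate so that the two centers lie at $(0,0)$ and $(d,0)$, which is legitimate since translations and rotations preserve both Euclidean distance and the tangency type. Along the common axis, the first circle meets the $x$-axis at $(\pm r_1,0)$ and the second at $(d\pm r_2,0)$. A direct case analysis on matching one of these four points as the unique intersection point gives $d=r_1+r_2$ in the externally tangent case (the two disks meet only at the common boundary point between them) and $d = |r_1-r_2|$ in the internally tangent case (one closed disk is contained in the other and they touch at a single point). Conversely, under either equation, solving the algebraic system $x^2+y^2=r_1^2$ and $(x-d)^2+y^2=r_2^2$ yields a unique real solution $(x,y)$ of the expected type, so the two circles are tangent in the claimed sense.

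The only subtle point is the degenerate case $d=0$: if the centers coincide and $r_1=r_2$ the two circles are equal rather than internally tangent, but the lemma is applied in the paper to pairs of genuinely distinct circles, so this case causes no issue. Beyond this bookkeeping, there is no real obstacle; the work is essentially the one-variable computation along the line through the two centers together with squaring of a nonnegative identity.
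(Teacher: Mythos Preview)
Your proposal is correct and follows essentially the same approach as the paper: reduce to the distance characterization $d=r_1+r_2$ (external) and $d=|r_1-r_2|$ (internal), then square. The paper's own proof is even terser---it simply states the distance characterization and refers to a figure---so your added detail (normalizing the centers onto the $x$-axis and checking the algebraic system) is a reasonable expansion of the same idea.
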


\begin{proof}
Circles $\Circle(x_1,y_1,r_1)$ and $\Circle(x_2,y_2,r_2)$ are externally tangent if and only if the segment connecting their centers $(x_1,x_2)$ and $(y_1,y_2)$ has length $r_1+r_2$, and they are internally tangent if and only if it has length $|r_1-r_2|$.
See \cref{fig:tangencyConditions} for an illustration.
\begin{figure}[ht]
    \centering
    \includegraphics[page=5]{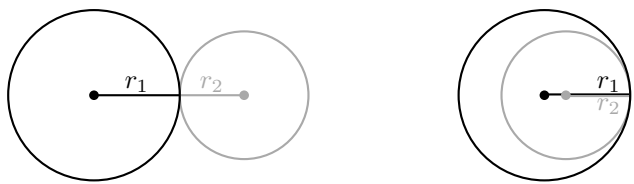}
    \caption{External and internal tangency of circles.}
    \label{fig:tangencyConditions}
\end{figure}
\end{proof}

\begin{lemma}\label{tangency polynomial}
If\/ $a,b,c,\varphi\in\setR$, $(a,b)\neq(0,0)$, and the vectors\/ $(a,b)$ and\/ $(\cos\varphi,\sin\varphi)$ are not parallel in\/ $\setR^2$, then the equality\/ $(a+R\cos\varphi)^2+(b+R\sin\varphi)^2=(c+R)^2$ holds for at most one value\/ $R\in\setR$.
\end{lemma}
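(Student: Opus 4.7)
The plan is to expand both sides of the given equation and observe that the quadratic terms in $R$ cancel, leaving a linear equation in $R$ whose coefficients we can analyze via Cauchy–Schwarz.

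Concretely, expanding yields
\[(a+R\cos\varphi)^2+(b+R\sin\varphi)^2=a^2+b^2+2R(a\cos\varphi+b\sin\varphi)+R^2,\]
while $(c+R)^2=c^2+2cR+R^2$. The $R^2$ terms cancel, so the equation is equivalent to
\[2(c-a\cos\varphi-b\sin\varphi)R=c^2-a^2-b^2.\]
This is linear in $R$, hence has at most one solution provided the coefficient $c-a\cos\varphi-b\sin\varphi$ is nonzero, or the equation has no solutions at all if the coefficient vanishes but the right-hand side does not.

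The remaining case to rule out is when both the coefficient and the right-hand side vanish, i.e., $c=a\cos\varphi+b\sin\varphi$ and simultaneously $c^2=a^2+b^2$. The key observation is that by Cauchy--Schwarz,
\[(a\cos\varphi+b\sin\varphi)^2\leq(a^2+b^2)(\cos^2\varphi+\sin^2\varphi)=a^2+b^2,\]
with equality if and only if $(a,b)$ and $(\cos\varphi,\sin\varphi)$ are linearly dependent. Under the hypotheses that $(a,b)\neq(0,0)$ and that $(a,b)$ is not parallel to $(\cos\varphi,\sin\varphi)$, this inequality is strict, so $c^2=(a\cos\varphi+b\sin\varphi)^2<a^2+b^2$, contradicting $c^2=a^2+b^2$. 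Hence this degenerate case is impossible, completing the argument.

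The proof is essentially a direct computation; there is no real obstacle. The only subtlety is checking that the two hypotheses (nonzero $(a,b)$ and non-parallel vectors) are exactly what is needed to obtain the strict inequality in Cauchy--Schwarz that rules out the pathological case where the polynomial equation degenerates to $0=0$.
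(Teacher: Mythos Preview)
Your argument is correct and essentially identical to the paper's: both expand to see that the equation is linear in $R$ and then invoke the strict Cauchy--Schwarz inequality (using the non-parallelism hypothesis) to exclude the degenerate case where both the linear coefficient and the constant term vanish. One harmless sign slip: the rearranged equation should read $2(c-a\cos\varphi-b\sin\varphi)R=a^2+b^2-c^2$, but this does not affect the reasoning.
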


\begin{proof}
Consider the univariate polynomial $f$ defined by
\begin{align*}
f(R)&=(a+R\cos\varphi)^2+(b+R\sin\varphi)^2-(c+R)^2\\
&=(a^2+b^2-c^2)+2R(a\cos\varphi+b\sin\varphi-c).
\end{align*}
It is identically zero only if $a^2+b^2=c^2$ and $a\cos\varphi+b\sin\varphi=c$.
However, if $a^2+b^2=c^2$ and the vectors $(a,b)$ and $(\cos\varphi,\sin\varphi)$ are not parallel, then the Cauchy-Schwarz inequality yields $|a\cos\varphi+b\sin\varphi|<\sqrt{a^2+b^2}\cdot\sqrt{\cos^2\varphi+\sin^2\varphi}=|c|$, so $a\cos\varphi+b\sin\varphi\neq c$.
Since $f$ is not identically zero and has degree at most $1$, it has at most one root in $\setR$.
\end{proof}

We are now ready to present the details of the proof of \cref{main induction}.

\newcounter{savedtheorem}
\setcounter{savedtheorem}{\value{theorem}}
\setcounterref{theorem}{main induction}

\begin{proof}[Proof of \cref{main induction}]
We proceed by induction on $k$ and, for the induction step, construct the family of circles $\calC_{k+1}$ from a family of circles $\calC_k$ as was described above.
The following claim describes the properties of the set $X$ constructed by applying our enhanced version of Gallai's theorem, namely \cref{Gallai v2}, with appropriate polynomial constraints to~$T_0$.

\begin{subclaim}\label{claim:Gallai}
There exists a finite set $X\subset\setR^3$ and a collection $\calT$ of homothetic copies of $T_0$ in $X$ with the following properties:
\begin{enumerate}
\item for any two distinct points $(x_1,y_1,r_1),(x_2,y_2,r_2)\in X$, we have
\begin{enumerate}
\item $(x_1,y_1)\neq(x_2,y_2)$,
\item $(x_1-x_2)^2+(y_1-y_2)^2\neq(r_1-r_2)^2$,
\end{enumerate}
\item no tuple of fewer than $\lceil\nicefrac{g}{3}\rceil$ homothetic copies of $T_0$ in $\calT$ form a cycle,
\item every $k$-coloring of $X$ contains a monochromatic homothetic copy of $T_0$ in $\calT$.
\end{enumerate}
\end{subclaim}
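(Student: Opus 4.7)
The plan is to obtain \cref{claim:Gallai} as a direct application of \cref{Gallai v2} to the set $T_0$, with a carefully chosen family $\calF$ of polynomial constraints that encode conditions~1(a) and~1(b).

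First I would introduce the two bivariate (in points of $\setR^3$) polynomials
\[f_1\bigl((x_1,y_1,r_1),(x_2,y_2,r_2)\bigr)=(x_1-x_2)^2+(y_1-y_2)^2\]
and
\[f_2\bigl((x_1,y_1,r_1),(x_2,y_2,r_2)\bigr)=(x_1-x_2)^2+(y_1-y_2)^2-(r_1-r_2)^2.\]
Both are $6$-variate real polynomials, and $f_1(p,q)\neq 0$ is the translation of ``$(x_1,y_1)\neq(x_2,y_2)$'' while $f_2(p,q)\neq 0$ is the translation of condition~1(b). The key observation is that these polynomials respect $T_0$: the induction hypothesis tells us that no two circles in $\calC_k$ are concentric, so $f_1$ is non-zero on distinct pairs of points of $T_0$, and no two circles of $\calC_k$ are internally tangent, which, by \cref{tangency conditions}, is exactly the statement that $f_2$ is non-zero on distinct pairs of points of $T_0$.

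Next I would apply \cref{Gallai v2} in dimension $d=3$ with $T=T_0$, the finite family $\calF=\{f_1,f_2\}$, the girth parameter $\lceil\nicefrac{g}{3}\rceil$, and $k$ colors. This yields a finite set $X\subset\setR^3$ and a collection $\calT$ of homothetic copies of $T_0$ in $X$ such that $\calF$ respects $X$, no tuple of fewer than $\lceil\nicefrac{g}{3}\rceil$ members of $\calT$ form a cycle, and every $k$-coloring of $X$ contains a monochromatic homothetic copy of $T_0$ in $\calT$. These three statements translate verbatim to conditions~1(a)/1(b), 2, and 3 of the claim, respectively.

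There is essentially no real obstacle in this step, since the technical work has already been compressed into \cref{Gallai v2}; the only thing to verify (and the reason \cref{Gallai v2} was formulated with polynomial constraints in the first place) is that the polynomials encoding the two unwanted geometric configurations among pairs of circles are non-zero on the ``template'' $T_0$, which is guaranteed by the inductive hypothesis on $\calC_k$.
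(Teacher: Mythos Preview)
Your proposal is correct and matches the paper's proof essentially line for line: the paper defines the same two $6$-variate polynomials (called $f_{\mathsf a}$ and $f_{\mathsf b}$ there), verifies that they respect $T_0$ using the induction hypothesis on $\calC_k$ together with \cref{tangency conditions}, and then applies \cref{Gallai v2} with $\calF=\{f_{\mathsf a},f_{\mathsf b}\}$, girth parameter $\lceil g/3\rceil$, and $k$ colours to obtain $X$ and $\calT$.
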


\begin{claimproof}
Consider the following two $6$-variate polynomials:
\begin{align*}
f_{\mathsf{a}}(x_1,y_1,r_1,x_2,y_2,r_2)&=(x_1-x_2)^2+(y_1-y_2)^2,\\
f_{\mathsf{b}}(x_1,y_1,r_1,x_2,y_2,r_2)&=(x_1-x_2)^2+(y_1-y_2)^2-(r_1-r_2)^2.
\end{align*}
We have $f_{\mathsf{a}}(x_1,y_1,r_1,x_2,y_2,r_2)\neq 0$ if and only if $(x_1,y_1)\neq(x_2,y_2)$, which holds in particular for distinct points $(x_1,y_1,r_1),(x_2,y_2,r_2)\in T_0$, by the assumption that no two circles in~$\calC_k$ are concentric.
\cref{tangency conditions} and the assumption that no two circles in $\calC$ are internally tangent imply $f_{\mathsf{b}}(x_1,y_1,r_1,x_2,y_2,r_2)\neq 0$ for any distinct points $(x_1,y_1,r_1),(x_2,y_2,r_2)\in T_0$.
\cref{Gallai v2} applied to $T_0$, $\calF=\{f_{\mathsf{a}},f_{\mathsf{b}}\}$, $\lceil\nicefrac{g}{3}\rceil$, and $k$ directly yields the requested set $X$ and collection $\calT$.
\end{claimproof}

For the construction of the families of circles $\{\nu_{R,T}\}_{T\in\calT}$, we let $\{\phi_T\}_{T\in\calT}$ be a set of distinct angles in $[0,\pi)$ such that for every $T\in\calT$, the unit vector $(\cos\phi_T,\sin\phi_T)\in\setR^2$ is not parallel to the vector $(x_1-x_2,y_1-y_2)$ for any distinct points $(x_1,y_1,r_1),(x_2,y_2,r_2)\in X$, where the latter vector is non-zero, by condition~1a of \cref{claim:Gallai}.

We now claim that for a sufficiently large $R$, the family $\calC''_R$ defined above satisfies the following conditions on concentricity and tangency.

\begin{subclaim}\label{claim:edges}
The following holds when $R$ is sufficiently large:
\begin{enumerate}
\item no two circles in $\calC''_R$ are concentric,
\item no two circles in $\calC''_R$ are internally tangent,
\item a pair of circles in $\calC''_R$ is externally tangent if and only if it belongs to one of the two following types:
\begin{enumerate}
\item $\mu_{R,T}(C)$ and $\nu_{R,T}(C)$ for any $T\in\calT$ and any $C\in\calC_k$,
\item $\nu_{R,T}(C_1)$ and $\nu_{R,T}(C_2)$ for any $T\in\calT$ and any $C_1,C_2\in\calC_k$ that are externally tangent.
\end{enumerate}
\end{enumerate}
\end{subclaim}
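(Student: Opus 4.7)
The plan is to analyze all pairs of distinct circles in $\calC''_R$ by cases, based on whether each circle in the pair is a ``large'' circle $\mu_{R,T}(C)\in\calC'_R$ or a ``small'' circle $\nu_{R,T}(C)$. For each pair I would compute the squared distance between the two centers and compare it with the squared sum and the squared difference of the two radii, invoking \cref{tangency conditions}. Since $\calT$ and $\calC_k$ are finite, the total number of pairs to inspect is finite, so it suffices to show that each pair contributes only finitely many ``exceptional'' values of $R$; choosing $R$ larger than all of them yields the claim.

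For a pair of two large circles $\mu_{R,T_1}(C_1)$ and $\mu_{R,T_2}(C_2)$, the centers $(x'_1,y'_1)$ and $(x'_2,y'_2)$ come from distinct points of $X$, so condition~1(a) of \cref{claim:Gallai} gives non-concentricity and condition~1(b) together with \cref{tangency conditions} rules out internal tangency. External tangency would require $(x'_1-x'_2)^2+(y'_1-y'_2)^2=(2R-r'_1-r'_2)^2$; the left-hand side is a bounded quantity independent of $R$ while the right-hand side grows without bound, so this fails for all sufficiently large $R$.

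For a mixed pair $\mu_{R,T_1}(C_1)$ and $\nu_{R,T_2}(C_2)$ I would split into two sub-cases. If the underlying points $(x'_1,y'_1,r'_1)$ and $(x'_2,y'_2,r'_2)$ of $X$ coincide, then $\mu_{R,T_1}(C_1)$ is literally the same circle as $\mu_{R,T_2}(C_2)$ and is externally tangent to $\nu_{R,T_2}(C_2)$ by construction, matching type~3(a). Otherwise the vector $(x'_1-x'_2,y'_1-y'_2)$ is non-zero by~1(a) and not parallel to $(\cos\phi_{T_2},\sin\phi_{T_2})$ by the choice of the angles $\phi_T$; each of the two tangency equations then satisfies the hypotheses of \cref{tangency polynomial} with $R$ playing the role of its parameter, so external (resp.\ internal) tangency can hold for at most one value of $R$. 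Non-concentricity in this sub-case is even easier: the center of $\nu_{R,T_2}(C_2)$ is displaced from $(x'_2,y'_2)$ by $(R-r^*_{T_2})(\cos\phi_{T_2},\sin\phi_{T_2})$, and by the non-parallelism condition this cannot equal $(x'_1,y'_1)$ for any $R$.

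Finally, for two small circles $\nu_{R,T_1}(C_1)$ and $\nu_{R,T_2}(C_2)$, if $T_1=T_2=T$ both centers share the common translation $(R-r^*_T)(\cos\phi_T,\sin\phi_T)$, so the pairwise distance and the radii coincide (up to the common factor $\lambda_T$) with those of $C_1$ and $C_2$ inside $\calC_k$. The inductive hypothesis then rules out concentricity and internal tangency, while external tangency corresponds precisely to an edge of $G(\calC_k)$, yielding exactly the pairs of type~3(b). If $T_1\neq T_2$, the angles $\phi_{T_1},\phi_{T_2}$ are distinct elements of $[0,\pi)$, so the unit vectors $(\cos\phi_{T_i},\sin\phi_{T_i})$ are linearly independent; consequently the distance between the two $\nu$-centers grows linearly in $R$, while the radii $\lambda_{T_i}r_i$ remain bounded, ruling out any tangency or concentricity once $R$ is sufficiently large. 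The main technical point is to unify the finitely many exceptional values of $R$ produced in all these sub-cases and pick $R$ larger than every one of them.
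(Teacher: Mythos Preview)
Your proposal is correct and follows essentially the same four-case analysis as the paper (large--large, small--small same $T$, small--small different $T$, large--small), invoking \cref{claim:Gallai}, \cref{tangency conditions}, and \cref{tangency polynomial} in the same places. One small caveat: not every circle in $\calC'_R$ is of the form $\mu_{R,T}(C)$, since the copies in $\calT$ need not cover $X$; in the mixed case you should take the large circle to be $\Circle(x',y',R-r')$ for an arbitrary $(x',y',r')\in X$ (as the paper does), but your argument already uses only properties of that $X$-point, so nothing substantive changes.
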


\begin{claimproof}
First, consider two distinct circles $C'=\Circle(x',y',R-r')$ and $C''=\Circle(x'',y'',R-r'')$ in $\calC'_R$, where $(x',y',r'),(x'',y'',r'')\in X$.
By condition~1a of \cref{claim:Gallai}, the circles $C'$ and $C''$ are not concentric.
By \cref{tangency conditions}, the circles $C'$ and $C''$ are internally tangent if and only if $(x'-x'')^2+(y'-y'')^2=(r'-r'')^2$, which does not hold due to condition~1b of \cref{claim:Gallai}.
Also by \cref{tangency conditions}, the circles $C'$ and $C''$ are externally tangent if and only if $(x'-x'')^2+(y'-y'')^2=(2R-r'-r'')^2$, which does not hold when $R$ is sufficiently large.

Next, let $T\in\calT$, and consider two distinct circles $C'_1$ and $C'_2$ such that for $i\in[2]$, we have $C'_i=\Circle\bigl(x'_i+(R-r^*_T)\cos\phi_T,\;y'_i+(R-r^*_T)\sin\phi_T,\;\lambda_Tr_i\bigr)=\nu_{R,T}(C_i)$, where $C_i=\Circle(x_i,y_i,r_i)\in\calC_k$ and $h_T(x_i,y_i,r_i)=(x'_i,y'_i,r'_i)\in T$.
The assumption that the circles $C_1$ and $C_2$ are not concentric, that is, $(x_1,y_1)\neq(x_2,y_2)$, yields $(x'_1,y'_1)\neq(x'_2,y'_2)$, which implies that the circles $C'_1$ and $C'_2$ are not concentric.
By \cref{tangency conditions}, the circles $C'_1$ and $C'_2$ are internally tangent if and only if $(x'_1-x'_2)^2+(y'_1-y'_2)^2=\lambda_T^2(r_1-r_2)^2$, which is equivalent to $(x_1-x_2)^2+(y_1-y_2)^2=(r_1-r_2)^2$, which does not hold due to the assumption that $C_1$ and $C_2$ are not internally tangent.
Also by \cref{tangency conditions}, the circles $C'_1$ and $C'_2$ are externally tangent if and only if $(x'_1-x'_2)^2+(y'_1-y'_2)^2=\lambda_T^2(r_1+r_2)^2$, which is equivalent to $(x_1-x_2)^2+(y_1-y_2)^2=(r_1+r_2)^2$, which means that $C_1$ and $C_2$ are externally tangent.

Next, for two distinct $T_1,T_2\in\calT$, consider circles of the form $C'_1=\nu_{R,T_1}(C_1)$ and $C'_2=\nu_{R,T_2}(C_2)$, where $C_1,C_2\in\calC_k$.
Since $\phi_{T_1}\neq\phi_{T_2}$ and the radii of $C'_1$ and $C'_2$ are independent of $R$, the circles $C'_1$ and $C'_2$ are arbitrarily far apart as $R$ grows.
In particular, they are disjoint and not nested when $R$ is sufficiently large.

Finally, consider a circle $C'=\Circle(x',y',R-r')\in\calC'_R$, where $(x',y',r')\in X$, and a circle $C''=\Circle\bigl(x''+(R-r^*_T)\cos\phi_T,\;y''+(R-r^*_T)\sin\phi_T,\;\lambda_Tr\bigr)=\nu_{R,T}(C)$, where $C=\Circle(x,y,r)\in\calC_k$ and $h_T(x,y,r)=(x'',y'',r'')\in T$.
When $R>r^*_T$, since the vectors $(x''-x',y''-y')$ and $(\cos\phi_T,\sin\phi_T)$ are not parallel unless $(x',y')=(x'',y'')$, the circles $C'$ and $C''$ are not concentric.
By \cref{tangency conditions}, the circles $C'$ and $C''$ are externally tangent if and only if
\[\bigl(x''-x'+(R-r^*_T)\cos\phi_T\bigr)^2+\bigl(y''-y'+(R-r^*_T)\sin\phi_T\bigr)^2=(R-r'+\lambda_Tr)^2,\]
which is equivalent to
\[\bigl((x''-x')+R'\cos\phi_T\bigr)^2+\bigl((y''-y')+R'\sin\phi_T\bigr)^2=\bigl((r''-r')+R'\bigr)^2,\]
where $R'=R-r^*_T$.
By \cref{tangency polynomial}, the equality above holds for at most one value of $R'$ (so at most one value of $R$) unless $(x',y')=(x'',y'')$, in which case it clearly holds for every value of $R'$ (so every value of $R$) whenever $r'=r''$.
The latter case means that $C''=\mu_{R,T}(C)$, as requested in case~3a.
Also by \cref{tangency conditions}, the circles $C'$ and $C''$ are internally tangent if and only if
\[\bigl(x''-x'+(R-r^*_T)\cos\phi_T\bigr)^2+\bigl(y''-y'+(R-r^*_T)\sin\phi_T\bigr)^2=(R-r'-\lambda_Tr)^2,\]
which is equivalent to
\[\bigl((x''-x')+R'\cos\phi_T\bigr)^2+\bigl((y''-y')+R'\sin\phi_T\bigr)^2=\bigl((2r^*_T-r'-r'')+R'\bigr)^2,\]
where $R'=R-r^*_T$.
By \cref{tangency polynomial}, the equality above holds for at most one value of $R'$ (so at most one value of $R$) unless $(x',y')=(x'',y'')$.
In the latter case, by condition~1a of \cref{claim:Gallai}, we have $r'=r''$, which implies that $C'$ and $C''$ are externally tangent (as we have shown previously), so they cannot be internally tangent.
\end{claimproof}

Let $R>R_0$ be sufficiently large for the conclusions of \cref{claim:edges} to hold.
Conditions 2 and~3 of \cref{claim:edges} imply the following structure of the tangency graph $G(\calC''_R)$: for every $T\in\calT$, the subgraph induced on the vertices in $\calC'_{R,T}$ is isomorphic to $G(\calC_k)$ and the remaining edges form a collection of matchings between the vertices in $\calC'_{R,T}$ (which are of the form $\nu_{R,T}(C)$ for $C\in\calC_k$) and the vertices in $\calC'_R$ of the form $\mu_{R,T}(C)$ for $C\in\calC_k$.
We exploit this structure in the proofs of the final two claims, which are analogous to \cref{claim:girth-theta,claim:chromatic number-theta}.

\begin{subclaim}\label{claim:girth}
The tangency graph $G(\calC''_R)$ has girth at least $g$.
\end{subclaim}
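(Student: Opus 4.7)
The plan is to follow the template of \cref{claim:girth-theta}, adapting the counting to the structure of $G(\calC''_R)$ described after \cref{claim:edges}: for each $T\in\calT$, the induced subgraph on $\calC'_{R,T}$ is isomorphic to $G(\calC_k)$, and the remaining edges form a matching between $\calC'_{R,T}$ and $\calC'_R$, in which each small circle $\nu_{R,T}(C)$ has precisely one large-circle neighbor, namely $\mu_{R,T}(C)$. Two large circles are never tangent (by \cref{claim:edges}, item~3), and small circles in different $\calC'_{R,T}$'s are never tangent (item~3b).

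Given a cycle in $G(\calC''_R)$ of length $\ell$, I consider two cases. If the cycle lies entirely within some $\calC'_{R,T}$, then $\ell\geq g$ by the induction hypothesis applied to $G(\calC_k)$. Otherwise the cycle passes through some vertices of $\calC'_R$; let $v_1,\ldots,v_m$ be these large circles in cyclic order. Between $v_i$ and $v_{i+1}$ (indices taken mod $m$), the cycle traverses a subpath of small circles, all of which belong to a common $\calC'_{R,T_i}$ because tangent small circles share the same parameter $T$.

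The key structural observation I would emphasize is that each such subpath contains \emph{at least two} small circles: a single small circle between $v_i$ and $v_{i+1}$ would have to be tangent to both, but by the matching property each small circle has only one large-circle neighbor, forcing $v_i=v_{i+1}$, a contradiction. Next, $v_i$ corresponds to a point of $X$ lying in both $T_{i-1}$ and $T_i$ (namely $h_{T_i}(x,y,r)$ for the $C=\Circle(x,y,r)$ matched to $v_i$ inside $\calC'_{R,T_i}$). Since the $v_i$'s are distinct vertices, these points of $X$ are distinct. Consequently $(T_1,\ldots,T_m)$ forms a cycle on $X$ in $\calT$ of length $m$, or—if the $T_i$'s have repetitions—contains a shorter one; either way, item~2 of \cref{claim:Gallai} gives $m\geq\lceil g/3\rceil$.

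To finish, I count: the cycle contains $m$ large circles and at least $2m$ small circles, so $\ell\geq 3m\geq 3\lceil g/3\rceil\geq g$. The only subtlety I expect to require care is the ``at least two small circles per segment'' argument, since this is precisely what forces the factor~$3$ in $\lceil g/3\rceil$ (compared with the factor~$2$ used in \cref{claim:girth-theta}); it reflects that large--small adjacencies here form a matching rather than the denser bipartite pattern that appeared in the $\theta$-graph construction.
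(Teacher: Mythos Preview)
Your proof is correct and follows essentially the same route as the paper's own proof: the same case split, the same identification of the sequence $(T_1,\ldots,T_m)$, the same appeal to condition~2 of \cref{claim:Gallai}, and the same final count $\ell\geq 3m\geq g$. If anything, you spell out more explicitly than the paper why each inter-large-circle segment contains at least two small circles (the paper simply asserts it), and your justification via the matching property is exactly the right one.
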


\begin{claimproof}
Let $G=G(\calC''_R)$.
For every $T\in\calT$, since the subgraph of $G$ induced on the vertices in $\calC'_{R,T}$ is isomorphic to $G(\calC)$, the girth of which is at least $g$ by the induction hypothesis, every cycle in $G$ that lies entirely within $\calC'_{R,T}$ has length at least $g$.
Consider now a cycle in $G$ of length $\ell\geq 3$ that does not lie entirely within $\calC'_{R,T}$ for any $T\in\calT$.
It must contain vertices from $\calC'_R$, say, $C_1,\ldots,C_m$ in this order along the cycle.
For each $i\in[m]$, since $C_i$ has no edges to the rest of $\calC'_R$ and at most one edge to $\calC'_{R,T}$ for each $T\in\calT$, the neighbors of $C_i$ on the cycle lie in two different sets of the form $\calC'_{R,T}$.
For each $i\in[m]$, let $T_i\in\calT$ be such that the part of the cycle between $C_i$ and $C_{i+1}$ (or $C_1$ if $i=m$) lies within $\calC'_{R,T_i}$.
It follows that $(T_1,\ldots,T_m)$ is a cycle in $\calT$ of length $m$ or contains such a cycle if some members of $\calT$ repeat among $T_1,\ldots,T_m$.
Condition~2 of \cref{claim:Gallai} yields $m\geq\lceil\nicefrac{g}{3}\rceil$.
Since there are at least two vertices from $\calC'_{R,T_i}$ between $C_i$ and $C_{i+1}$ (or $C_1$ when $i=m$) for any $i\in[m]$, we conclude that $\ell\geq 3m\geq g$.
\end{claimproof}

\begin{subclaim}\label{claim:chromatic number}
The tangency graph $G(\calC''_R)$ has chromatic number at least $k+1$.
\end{subclaim}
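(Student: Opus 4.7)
My plan is to mimic the strategy used in \cref{claim:chromatic number-theta}, but applied to the set $X$ and the collection $\calT$ provided by \cref{claim:Gallai}. Suppose for contradiction that $G(\calC''_R)$ admits a proper $k$-coloring $\phi$. The ``large'' circles in $\calC'_R$ are in bijection with the points of $X$ via the correspondence $X\ni(x',y',r')\leftrightarrow\Circle(x',y',R-r')\in\calC'_R$, so $\phi$ restricted to $\calC'_R$ induces a $k$-coloring of $X$.

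By condition~3 of \cref{claim:Gallai}, this induced coloring of $X$ contains a monochromatic homothetic copy $T\in\calT$ of $T_0$; let $c$ denote its common color. Equivalently, every circle in $\{\mu_{R,T}(C):C\in\calC_k\}\subseteq\calC'_R$ is assigned color $c$ by $\phi$. Now I invoke condition~3a of \cref{claim:edges}: for each $C\in\calC_k$, the ``small'' circle $\nu_{R,T}(C)\in\calC'_{R,T}$ is externally tangent to $\mu_{R,T}(C)$, hence adjacent to it in $G(\calC''_R)$. Since $\phi$ is proper, this forces $\phi(\nu_{R,T}(C))\neq c$ for every $C\in\calC_k$; in other words, $\phi$ restricted to $\calC'_{R,T}$ uses at most $k-1$ colors.

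On the other hand, conditions 2 and~3 of \cref{claim:edges} imply that the subgraph of $G(\calC''_R)$ induced on $\calC'_{R,T}$ is isomorphic to $G(\calC_k)$ (via the bijection $\nu_{R,T}(C)\leftrightarrow C$, which preserves external tangencies and non-edges by the analysis already carried out in \cref{claim:edges}). By the induction hypothesis, $G(\calC_k)$ has chromatic number at least $k$, so its isomorphic copy cannot be properly colored with only $k-1$ colors, contradicting the previous paragraph. The main obstacle is simply to extract the correct bijective identifications and to apply \cref{claim:Gallai} to the restriction of $\phi$ to $\calC'_R$; once the set-up in \cref{claim:edges} is in hand, the argument proceeds exactly as in the warm-up case.
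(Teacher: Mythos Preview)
Your proof is correct and follows essentially the same approach as the paper: suppose a proper $k$-coloring exists, restrict it to $\calC'_R$ to get a $k$-coloring of $X$, invoke condition~3 of \cref{claim:Gallai} to find a monochromatic $T\in\calT$, use the matching from \cref{claim:edges}(3a) to exclude that color on $\calC'_{R,T}$, and contradict the induction hypothesis via the isomorphism $G(\calC'_{R,T})\cong G(\calC_k)$. The only cosmetic difference is that the paper records the isomorphism $G(\calC'_{R,T})\cong G(\calC_k)$ as a structural observation preceding the claim rather than deriving it inside the proof.
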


\begin{claimproof}
Suppose for the sake of contradiction that the graph $G=G(\calC''_R)$ is $k$-colorable.
Pick a proper $k$-coloring of $G$, and consider its restriction to the vertices in $\calC'_R$.
It induces a $k$-coloring of $X$ via the correspondence $X\ni(x',y',r')\leftrightarrow\Circle(x',y',R-r')\in\calC'_R$.
By condition~3 of \cref{claim:Gallai}, there is a monochromatic homothetic copy $T$ of $T_0$ in $\calT$, which means that the set of circles $\{\mu_{R,T}(C):C\in\calC_k\}$ is monochromatic.
Since these circles are connected to~$\calC'_{R,T}$ by a perfect matching in $G$, their common color does not occur on the circles in~$\calC'_{R,T}$.
Therefore, the given $k$-coloring of $G$ induces a proper $(k-1)$-coloring of the graph~$G(\calC'_{R,T})$, which is isomorphic to $G(\calC_k)$.
This contradicts the assumption that the graph~$G(\calC_k)$ has chromatic number at least $k$.
\end{claimproof}

We complete the proof of the induction step by setting $\calC_{k+1}=\calC''_R$ and observing that the induction statement follows from Claims \ref{claim:edges} (conditions 1 and~2), \ref{claim:girth}, and~\ref{claim:chromatic number}.
\end{proof}

\setcounter{theorem}{\value{savedtheorem}}

\section{Gallai's theorem with constraints revisited}
\label{sec:Gallai revisited}

This section is devoted to the following strengthening of \cref{Gallai v2}, in which we show that the assertion of the theorem holds for the set of all homothetic copies of $T$ in $X$ rather than for some chosen subset $\calT$ of it.
As was previously mentioned, this strengthening upgrades \cref{Gallai v2} into a generalization of \cref{Gallai}.

\begin{theorem}\label{Gallai v3}
Let\/ $T$ be a finite subset of\/ $\setR^d$ of size at least\/ $3$, let\/ $\calF$ be a countable family of\/ $2d$-variate real polynomials that respects\/ $T$, and let\/ $g$ and\/ $k$ be positive integers.
Then there exists a finite set\/ $X\subset\setR^d$ satisfying the following conditions:
\begin{enumerate}
\item $\calF$ respects\/ $X$,
\item no tuple of fewer than\/ $g$ homothetic copies of\/ $T$ in\/ $X$ form a cycle,
\item every\/ $k$-coloring of\/ $X$ contains a monochromatic homothetic copy of\/ $T$.
\end{enumerate}
\end{theorem}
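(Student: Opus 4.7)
The strategy is to refine the proof of \cref{Gallai v2} so that the resulting set $X=X_\gamma$ contains no homothetic copies of $T$ beyond those corresponding to combinatorial lines in the sparse Hales-Jewett set $H\subseteq[m]^n$ provided by \cref{Hales}; once this is achieved, the sparsity and coloring conclusions transfer directly from $H$ to $X$, so we can take $\calT$ to consist of all homothetic copies of $T$ in $X$.

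Concretely, I would start from the setup of the proof of \cref{Gallai v2}, fixing $n$ and $H$ via \cref{Hales} with parameters $m=|T|$, $g$, $k$, and using $\zeta_\gamma(x)=\sum_{i=1}^n\gamma_it_{x_i}$. The new ingredient is a countable family of additional polynomial conditions on $\gamma$: for each ordered $m$-tuple $\vec{x}=(x^{(1)},\ldots,x^{(m)})$ of distinct points of $H$ and each permutation $\sigma\in S_m$ such that $\{x^{(j)}\}$ is \emph{not} a combinatorial line canonically ordered by $\sigma$, the requirement that $(\zeta_\gamma(x^{(j)}))_j=(p^*+\lambda t_{\sigma(j)})_j$ for some $p^*\in\setR^d$ and $\lambda>0$ translates, after fixing a reference pair and coordinate to express $\lambda$ as a specific linear functional of $\gamma$, into a finite system of equations linear in $\gamma$. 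All resulting polynomials that are not identically zero are added to our list. The countable Baire category argument from the proof of \cref{Gallai v2} then produces $\gamma$ with positive coordinates simultaneously avoiding every zero set, so that the only homothetic copies of $T$ in $X_\gamma$ are those of the form $\zeta_\gamma(L)$ for a combinatorial line $L\subseteq H$. Conditions 1--3 of the theorem then follow exactly as in \cref{Gallai v2}, but now with $\calT$ equal to the set of \emph{all} homothetic copies of $T$ in $X$.

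The main obstacle is verifying that, for every bad configuration $(\vec{x},\sigma)$, at least one polynomial in the associated system is not identically zero. Specializing $\gamma$ to standard basis vectors $e_i$ reduces this to the following algebraic question: if for every coordinate $i\in[n]$ the tuple $(t_{x^{(j)}_i})_j$ coincides with $(a_i+\alpha_it_{\sigma(j)})_j$ for some $a_i\in\setR^d$ and $\alpha_i>0$ defining a positive homothetic self-map of $T$, must $\{x^{(j)}\}$ be a combinatorial line ordered by $\sigma$? When $T$ admits only the trivial self-homothety the answer is immediate and the proof concludes cleanly. The delicate case is that of highly symmetric $T$ admitting a non-trivial (necessarily finite cyclic) self-homothety group $\Pi$, where ``generalized combinatorial lines'' in which active coordinates are permuted independently by elements of $\Pi$ produce genuine extra homothetic copies. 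I would address this by a mild strengthening of \cref{Hales} for $\Pi$-generalized combinatorial lines---derivable by applying \cref{Hales} to an enlarged alphabet encoding the action of $\Pi$ on $T$---so that the sparsity and coloring properties extend to the full family of lines and hence, via the Baire-category step, to all homothetic copies of $T$ in $X_\gamma$.
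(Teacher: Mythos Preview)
Your approach is essentially the paper's: add extra polynomial constraints on $\gamma$ so that every homothetic copy of $T$ in $X_\gamma$ comes from a (generalized) combinatorial line, and then invoke a sparse Hales--Jewett theorem strong enough to control those lines. Two points of comparison are worth recording.

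First, the paper packages the homothety condition into a single $md$-variate polynomial $f$ with $f(p_1,\ldots,p_m)=0$ if and only if $(p_1,\ldots,p_m)\propto(t_1,\ldots,t_m)$, where $\propto$ allows \emph{any} real scale factor $\lambda$, including $0$ and negative values. Composing $f$ with $\gamma\mapsto(\zeta_\gamma(x^{(1)}),\ldots,\zeta_\gamma(x^{(m)}))$ gives one polynomial per tuple and avoids your ad hoc choice of a reference pair and coordinate.

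Second---and this is a genuine slip in your analysis---the specialization to $\gamma=e_i$ does \emph{not} yield $\alpha_i>0$. Polynomial constraints cannot detect the sign of the scale factor; what the vanishing at $\gamma=e_i$ gives is only $(t_{x^{(j)}_i})_j\propto(t_{\sigma(j)})_j$ for some $\alpha_i\in\setR$. Since $T$ is finite this forces $\alpha_i\in\{1,0,-1\}$, corresponding to an identity-permuted active coordinate, a constant coordinate, or (when $T$ is centrally symmetric) an active coordinate permuted by the central symmetry. Thus the configurations with identically-zero constraint are exactly the \emph{generalized combinatorial lines} in which each active coordinate may independently carry the identity or the central-symmetry permutation, and your group $\Pi$ is either trivial or of order two---never a larger cyclic group arising from positive self-homotheties, which for finite $T$ are always trivial. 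With this correction your plan goes through; the paper simply quotes the needed sparse Hales--Jewett theorem for generalized combinatorial lines directly from Pr\"omel and Voigt rather than deriving it from the ordinary one.
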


To provide the proof of \cref{Gallai v3}, we need some preparation.
For two $m$-tuples $(p_1,\ldots,p_m)$ and $(t_1,\ldots,t_m)$ of points in $\setR^d$, let $(p_1,\ldots,p_m)\propto(t_1,\ldots,t_m)$ denote that there is a map $h\colon\setR^d\to\setR^d$ of the form $h(t)=p^*+\lambda t$ for some $p^*\in\setR^d$ and $\lambda\in\setR$ such that $p_i=h(t_i)$ for every $i\in[m]$.
In contrast to the definition of a homothetic map, here we allow $\lambda$ to be negative or zero.
In particular, the constant mapping $h\equiv p^*$ with $\lambda=0$ witnesses that $(p^*,\ldots,p^*)\propto(t_1,\ldots,t_m)$.

\begin{lemma}\label{homothety polynomial}
For any\/ $t_1,\ldots,t_m\in\setR^d$, there is an\/ $md$-variate polynomial\/ $f$ such that for all\/ $p_1,\ldots,p_m\in\setR^d$, we have\/ $f(p_1,\ldots,p_m)=0$ if and only if\/ $(p_1,\ldots,p_m)\propto(t_1,\ldots,t_m)$.
\end{lemma}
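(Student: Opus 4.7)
The approach is to exploit the key feature of the relation $\propto$: unlike homothetic copies, which require $\lambda>0$, here $\lambda\in\setR$ is allowed to be negative or zero. This makes the set
\[V=\bigl\{(p_1,\ldots,p_m)\in(\setR^d)^m:(p_1,\ldots,p_m)\propto(t_1,\ldots,t_m)\bigr\}\]
a linear subspace of $\setR^{md}$. Indeed, $V$ is the image of the linear map $\setR^d\times\setR\to\setR^{md}$ sending $(p^*,\lambda)$ to $(p^*+\lambda t_1,\ldots,p^*+\lambda t_m)$, and the image of a linear map between finite-dimensional real vector spaces is itself a linear subspace.

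Once this is observed, the rest is a standard trick. Any linear subspace of $\setR^{md}$ is the common zero set of finitely many linear functionals $L_1,\ldots,L_N$; for instance, one may take a basis of the orthogonal complement $V^\perp$, where each basis vector, viewed as a linear functional via the standard inner product, has kernel containing $V$, and the intersection of these kernels is exactly $V$. Define
\[f(p_1,\ldots,p_m)=L_1(p_1,\ldots,p_m)^2+\cdots+L_N(p_1,\ldots,p_m)^2.\]
Then $f$ is an $md$-variate real polynomial (of degree at most $2$). Since each $L_i^2\geq 0$ on $\setR^{md}$, we have $f(p_1,\ldots,p_m)=0$ if and only if $L_i(p_1,\ldots,p_m)=0$ for every $i\in[N]$, which happens exactly when $(p_1,\ldots,p_m)\in V$, that is, $(p_1,\ldots,p_m)\propto(t_1,\ldots,t_m)$, as required.

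The only insight needed is recognizing that allowing $\lambda$ of arbitrary sign turns the locus of interest into a linear object; after that, the well-known sum-of-squares device for expressing real algebraic sets as the vanishing locus of a single polynomial finishes the argument. There is therefore no real obstacle to the proof.
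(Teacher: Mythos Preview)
Your proof is correct. Both your argument and the paper's finish with the same sum-of-squares device: express the locus as the common zero set of finitely many linear polynomials in the coordinates of $p_1,\ldots,p_m$, then take $f$ to be the sum of their squares. The difference lies only in how one sees that the locus is cut out by linear equations. The paper does this by hand: for each coordinate $j\in[d]$ and each index $i\geq 2$, it writes either the condition $(p_i)_j=(p_1)_j$ (when $(t_i)_j=(t_1)_j$) or the condition that the ratio $\bigl((p_i)_j-(p_1)_j\bigr)/\bigl((t_i)_j-(t_1)_j\bigr)$ agrees across all such pairs, thereby eliminating $\lambda$ and obtaining an explicit finite list of linear equations. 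You instead observe in one line that the locus is the image of the linear map $(p^*,\lambda)\mapsto(p^*+\lambda t_1,\ldots,p^*+\lambda t_m)$, hence a linear subspace, and take a basis of its orthogonal complement. Your route is shorter and more conceptual; the paper's is more constructive about which equations actually cut out $V$.
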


\begin{proof}
The condition that $(p_1,\ldots,p_m)\propto(t_1,\ldots,t_m)$ is equivalent to the existence of $\lambda\in\setR$ such that the following conditions are satisfied for all $i\in[m]\setminus\{1\}$ and all $j\in[d]$:
\begin{alignat*}{2}
& (p_i)_j=(p_1)_j && \qquad\text{if }(t_i)_j=(t_1)_j,\\
&\frac{(p_i)_j-(p_1)_j}{(t_i)_j-(t_1)_j}=\lambda && \qquad\text{if }(t_i)_j\neq (t_1)_j,\tag{$*$}
\end{alignat*}
where $(p_i)_j$ denotes the $j$th coordinate of $p_i$.
The fact that the conditions in ($*$) hold with a common value of $\lambda$ simply means that the left sides of these conditions are equal to each other.
With this observation, the conditions above are equivalent to a conjunction of equalities of the form $f_k(p_1,\ldots,p_m)=g_k(p_1,\ldots,p_m)$ for some $md$-variate polynomials $f_1,g_1,\ldots,f_n,g_n$ and some $n\in\setN$.
(Note that the values of $(t_i)_j$ are fixed constants.)
Consider the $md$-variate polynomial $f=\sum_{k=1}^n(f_k-g_k)^2$.
It follows that the conditions above are equivalent to the condition that $f(p_1,\ldots,p_m)=0$, as requested.
\end{proof}

For $m,n\in\setN$, a subset $L$ of the $n$-dimensional $m$-cube $[m]^n$ is called a \emph{generalized combinatorial line} if there exist a non-empty set of indices $J\subseteq[n]$, a choice of $x^*_j\in[m]$ for every $j\in[n]\setminus J$, and a choice of a permutation $\pi_j\colon[m]\to[m]$ for every $j\in J$ such that $L=\{x_i:i\in[m]\}$, where $(x_i)_j=\pi_j(i)$ for $j\in J$ and $(x_i)_j=x^*_j$ for $j\notin J$.
Observe that a combinatorial line (as defined in \cref{sec:gallai}) is a generalized combinatorial line for which all permutations $\pi_i$ are identities.
For emphasis, we will call such a combinatorial line an \emph{ordinary combinatorial line}.
We use the following generalization of \cref{Hales}.

\begin{theorem}[Prömel, Voigt~\cite{PV88}]\label{Hales v2}
For any\/ $m,g,k\in\setN$ with\/ $m\geq 3$, there exist\/ $n\in\setN$ and a set\/ $H\subseteq[m]^n$ such that every\/ $k$-coloring of\/ $H$ contains a monochromatic ordinary combinatorial line of\/ $[m]^n$ and no tuple of fewer than\/ $g$ generalized combinatorial lines of\/ $[m]^n$ contained in\/ $H$ form a cycle.
\end{theorem}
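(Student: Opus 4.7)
The plan is to refine the proof of \cref{Gallai v2} by using \cref{Hales v2} in place of \cref{Hales} and by enlarging the family of polynomials that are avoided, so that \emph{every} homothetic copy of $T$ in the resulting set is forced to arise as the image of a generalized combinatorial line in $H$. Once this is done, the sparsity statement of \cref{Hales v2} on generalized combinatorial lines transfers directly to the sparsity on all homothetic copies required by condition~2.

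First, I would apply \cref{Hales v2} with parameters $m=|T|$, $g$, and $k$ to obtain $n\in\setN$ and $H\subseteq[m]^n$. Write $T=\{t_1,\ldots,t_m\}$ and, for $\gamma\in\setR^n$, reuse the map $\zeta_\gamma\colon H\to\setR^d$ defined by $\zeta_\gamma(x)=\sum_{i=1}^n\gamma_it_{x_i}$. Assuming without loss of generality that $\delta\in\calF$, I would form the polynomials $F_{f,x,y}(\gamma)=f(\zeta_\gamma(x),\zeta_\gamma(y))$ for $f\in\calF$ and distinct $x,y\in H$; each such polynomial is non-zero by the argument from the proof of \cref{Gallai v2}. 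In addition, for every $m$-tuple $\vec{x}=(x^{(1)},\ldots,x^{(m)})$ of distinct points of $H$ and every permutation $\sigma$ of $[m]$, let $f_\sigma$ be the polynomial supplied by \cref{homothety polynomial} applied to the tuple $(t_{\sigma(1)},\ldots,t_{\sigma(m)})$, and define the $n$-variate polynomial $F_{\sigma,\vec{x}}(\gamma)=f_\sigma(\zeta_\gamma(x^{(1)}),\ldots,\zeta_\gamma(x^{(m)}))$. I would then apply \cref{zero set} to the countable family consisting of all $F_{f,x,y}$ together with all $F_{\sigma,\vec{x}}$ that are \emph{not} identically zero, to pick a vector $\gamma$ with positive coordinates avoiding every one of their zero sets, and set $X=\zeta_\gamma(H)$.

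The crux of the argument, and the step I expect to be the main obstacle, is the following claim: every homothetic copy $T'\subseteq X$ of $T$ equals $\zeta_\gamma(L)$ for some generalized combinatorial line $L\subseteq H$. To prove it, write $T'=h(T)$ for a homothetic map $h(t)=p^*+\lambda t$ with $\lambda>0$, put $q_i=h(t_i)$, and let $x^{(i)}$ be the unique element of $H$ with $\zeta_\gamma(x^{(i)})=q_i$, using the injectivity of $\zeta_\gamma$ that follows from $\delta\in\calF$. Then $(q_1,\ldots,q_m)\propto(t_1,\ldots,t_m)$, so $F_{\mathrm{id},\vec{x}}(\gamma)=0$, and by the choice of $\gamma$ the polynomial $F_{\mathrm{id},\vec{x}}$ must be identically zero. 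Specializing this identity at $\gamma=e_j$ for each $j\in[n]$ yields $(t_{x^{(1)}_j},\ldots,t_{x^{(m)}_j})\propto(t_1,\ldots,t_m)$; since the points $t_1,\ldots,t_m$ are distinct, this forces for each $j$ either that $x^{(i)}_j$ is independent of $i$, or that the map $i\mapsto x^{(i)}_j$ is a permutation of $[m]$. Because $q_1,\ldots,q_m$ are distinct, the set $J$ of indices where the second alternative occurs is non-empty, and $\{x^{(1)},\ldots,x^{(m)}\}$ is therefore a generalized combinatorial line with active coordinates $J$.

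Given the claim, condition~1 follows as in \cref{Gallai v2}, and condition~3 follows because a monochromatic ordinary combinatorial line produced by \cref{Hales v2} maps under $\zeta_\gamma$ to a monochromatic homothetic copy of $T$ (with positive scaling factor $\sum_{i\in I}\gamma_i$). For condition~2, each homothetic copy of $T$ in $X$ is $\zeta_\gamma(L)$ for a unique generalized combinatorial line $L\subseteq H$, and any cycle among homothetic copies in $X$ lifts through $\zeta_\gamma^{-1}$ (well defined by injectivity) to a cycle of the same length among generalized combinatorial lines in $H$; the sparsity assertion of \cref{Hales v2} then forces such cycles to have length at least $g$.
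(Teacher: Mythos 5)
Your proposal does not prove \cref{Hales v2}; it proves \cref{Gallai v3}. The statement under review is the sparse Hales--Jewett theorem of Prömel and Voigt, which the paper imports from~\cite{PV88} as an external black box --- there is no proof of it anywhere in the paper. It is a purely combinatorial assertion about $[m]^n$ and its generalized combinatorial lines, with no set $T\subset\setR^d$, no polynomials, and no homothetic maps. Your argument, however, \emph{begins} by invoking \cref{Hales v2} to obtain $n$ and $H$, and then transports its sparsity conclusion through the linear parametrization $\zeta_\gamma$ using \cref{zero set,homothety polynomial}. This cannot be a proof of \cref{Hales v2} without circularity; what you have actually written is a derivation of \cref{Gallai v3}, and indeed it matches the paper's own proof of \cref{Gallai v3} in \cref{sec:Gallai revisited} essentially step for step. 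The only deviation is cosmetic: you attach a polynomial $f_\sigma$ to every permutation $\sigma$ of $[m]$ via \cref{homothety polynomial}, whereas the paper fixes the single polynomial $f$ for $(t_1,\ldots,t_m)$ and lets the $m$-tuple $(x_1,\ldots,x_m)\in H^m$ range over all orderings, which already accounts for every permutation, so the extra family $\{f_\sigma\}$ is redundant. A genuine proof of \cref{Hales v2} would have to be a combinatorial construction carried out directly inside $[m]^n$, in the spirit of sparse Ramsey-type theorems, and no such argument appears in your proposal.
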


We now have all ingredients to prove \cref{Gallai v3}.

\begin{proof}[Proof of \cref{Gallai v3}]
Put $T=\{t_1,\ldots,t_m\}$, where $m=|T|$.
Let $n$ and $H\subseteq[m]^n$ be as claimed in \cref{Hales v2} applied to $m$, $g$, and $k$.
For a given vector $\gamma=(\gamma_1,\ldots,\gamma_n)\in \setR^n$, define a map $\zeta_\gamma\colon H\to\setR^d$ by $\zeta_\gamma(x)=\sum_{i=1}^n\gamma_it_{x_i}$, and put $X_\gamma=\zeta_\gamma(H)=\{\zeta_\gamma(x):x\in H\}\subset\setR^d$.

Let $f$ be the $md$-variate polynomial claimed in \cref{homothety polynomial} for $t_1,\ldots,t_m$.
For any fixed $x_1,\ldots,x_m\in H$ and all $i\in[m]$, consider the polynomial $p_{x_i}=\zeta_\gamma(x_i)=\sum_{j=1}^n\gamma_jt_{(x_i)_j}$ in variables $\gamma_1,\ldots,\gamma_n$.
We claim that the polynomial $f_{x_1,\ldots,x_m}$ in variables $\gamma_1,\ldots,\gamma_n$ given by
\[\textstyle f_{x_1,\ldots,x_m}=f(p_{x_1},\ldots,p_{x_m})=f\bigl(\sum_{j=1}^n\gamma_jt_{(x_1)_j},\;\ldots,\;\sum_{j=1}^n\gamma_jt_{(x_m)_j}\bigr)\]
is not identically zero unless $\{x_1,\ldots,x_m\}$ is a generalized combinatorial line of $[m]^n$.

Indeed, suppose $f_{x_1,\ldots,x_m}(\gamma)=0$ for all $\gamma\in\setR^n$.
Consider an index $j\in[m]$, and let $\gamma_j=1$ and $\gamma_{j'}=0$ for every $j'\neq j$.
It follows that $f(t_{(x_1)_j},\ldots,t_{(x_m)_j})=0$, so $(t_{(x_1)_j},\ldots,t_{(x_m)_j})\propto(t_1,\ldots,t_m)$.
This is possible only in one of the following three cases, corresponding to scaling factors $\lambda=1$, $\lambda=0$, and $\lambda=-1$ in the definition of $\propto$, respectively:
\begin{alphaenumerate}
\item $((x_1)_j,\ldots,(x_m)_j)=(1,\ldots,m)$,
\item $(x_1)_j=\cdots=(x_m)_j$,
\item the set $T$ is centrally symmetric, and $((x_1)_j,\ldots,(x_m)_j)=(\pi(1),\ldots,\pi(m))$ where $\pi$ is a permutation of $T$ that corresponds to the central symmetry of $T$.
\end{alphaenumerate}
The fact that one of these conditions holds for every $j\in[m]$ implies that $\{x_1,\ldots,x_m\}$ is a generalized combinatorial line of $[m]^n$, as claimed.

We continue as in the proof of \cref{Gallai v2} with the following additional detail: to the family of polynomials on which \cref{zero set} is applied, we add the polynomials $f_{x_1,\ldots,x_m}$ for all choices of $x_1,\ldots,x_m\in H$ which \emph{do not} form a generalized combinatorial line of $[m]^n$.
By the claim above, these polynomials are not identically zero, so \cref{zero set} can be applied.
As a result, by the argument from the proof of \cref{Gallai v2}, we obtain a vector $\gamma\in\setR^n$ which yields a map $\zeta=\zeta_\gamma$ that is injective and a set $X=X_\gamma=\zeta(H)$ such that conditions 1 and~3 of \cref{Gallai v3} hold for $X$.
Moreover, $f_{x_1,\ldots,x_m}(\gamma)\neq 0$ for all choices of $x_1,\ldots,x_m\in H$ which do not form a generalized combinatorial line of $[m]^n$.

It remains to show that condition~2 holds for $X$.
Every homothetic copy of $T$ in $X$ is a set $\{p_1,\ldots,p_m\}\subseteq X$ such that $(p_1,\ldots,p_m)\propto(t_1,\ldots,t_m)$, which implies that $f(p_1,\ldots,p_m)=0$.
Let $x_1,\ldots,x_m\in H$ be such that $p_i=\zeta(x_i)=p_{x_i}(\gamma)$.
Since $f(p_1,\ldots,p_m)=0$, the set $\{x_1,\ldots,x_m\}$ is a generalized combinatorial line of $[m]^n$.
Since no tuple of fewer than $g$ generalized combinatorial lines contained in $H$ form a cycle and $\zeta$ is injective, it follows that no tuple of fewer than $g$ homothetic copies of $T$ in $X$ form a cycle, which is condition~2.
\end{proof}

\section*{Acknowledgments}

This work was initiated at the online workshop ``Geometric graphs and hypergraphs''.
We thank the organizers Torsten Ueckerdt and Yelena Yuditsky for a very nice workshop and all participants for fun coffee breaks and a fruitful atmosphere.
We thank the anonymous reviewers for their helpful suggestions.

\bibliography{Ringel}

\end{document}